\documentclass[a4paper]{article}

\usepackage[english]{babel}
\usepackage[latin1]{inputenc}
\usepackage{epsfig}
\usepackage{amsthm,amssymb,amsbsy,amsmath,amsfonts,amssymb,amscd,mathrsfs}
\usepackage{colortbl}
\usepackage{verbatim}
\usepackage{url}

%
\usepackage{mathtools}          
\usepackage{amssymb}
\usepackage{stmaryrd}           
\usepackage{color}
\usepackage{graphics}
\usepackage{graphicx}
\usepackage{epsfig}
\usepackage{wrapfig}
\usepackage{tikzgraphicx}
\usetikzlibrary{hobby}
\usepackage[f]{esvect}
\usepackage{multirow,enumerate}

\usepackage{amsthm}%

\newtheorem{remark}{Remark}[section]
\newtheorem{proposition}{Proposition}[section]

\newtheorem{defi}{Definition}[section]

\newcommand{\norme}[1]{\lVert#1\rVert}
\newcommand{\diff}{\,{\rm d}}
\def\xCn#1{{\rm C}^#1}

\newcommand{\nbSet}[1]{\mathbb{#1}}
\newcommand{\R}{\nbSet{R}} 

\newcommand{\uset}{\mathcal{U}}     

\newcommand{\intervalle}[4]{\mathopen{#1}#2
                                \mathclose{}\mathpunct{},#3
                                \mathclose{#4}}
\newcommand{\intervalleff}[2]{\intervalle{[}{#1}{#2}{]}}

\newcommand{\intervallefo}[2]{\intervalle{[}{#1}{#2}{)}}

\newcommand{\enstq}[2]{\left\{#1\mathrel{}\middle|\mathrel{}#2\right\}}


\newcommand{\Htrue}{\mathbf{H}}

\newcommand{\liebra}[2]{{[}{#1},{#2}{]}}

\definecolor{blue}{rgb}{0.1 0.1 0.8}
\definecolor{red}{rgb}{0.95 0.1 0.1}
\definecolor{green}{rgb}{0.2 0.6 0.2}
\definecolor{blueDrift}{rgb}{0.1 0.1 0.8}
\definecolor{redDrift}{rgb}{0.8 0.1 0.1}
\definecolor{greenDrift}{rgb}{0.3 0.8 0.3}

 \usepackage{circuitikz}
 \usetikzlibrary{arrows,shapes,backgrounds,patterns}
 \usepackage{pifont}
 \usepackage{tikzgraphicx}
 \tikzstyle{every picture}+=[remember picture]
 \tikzstyle{na} = [baseline=-.5ex]
 \makeatletter
 \newcommand{\gettikzxy}[3]{%
  \tikz@scan@one@point\pgfutil@firstofone#1\relax
  \edef#2{\the\pgf@x}%
  \edef#3{\the\pgf@y}}%
\usepackage{textcomp}   
\DeclareMathOperator{\sign}{sign}

\DeclareMathOperator{\atan}{atan}

\newcommand{\MBall}{\mathbb{B}}                       
\newcommand{\MSphere}{\mathbb{S}}


\title{Abnormal Geodesics in 2D-Zermelo Navigation Problems in the Case of Revolution and the Fan Shape of the Small Time Balls}

\author{B.~Bonnard\footnote{Inria, 2004 route des Lucioles F-06902, Sophia Antipolis, France, bbonnard@u-bourgogne.fr.},
O.~Cots\footnote{Toulouse Univ., INP-ENSEEIHT-IRIT, UMR CNRS 5505, 2 rue Camichel, 31071 Toulouse, France, olivier.cots@irit.fr.},
J.~Gergaud\footnote{Toulouse Univ., INP-ENSEEIHT-IRIT, UMR CNRS 5505, 2 rue Camichel, 31071 Toulouse, France, joseph.gergaud@irit.fr.}
 and
B.~Wembe\footnote{Toulouse Univ., IRIT-UPS, UMR CNRS 5505, 118 route de Narbonne, 31062 Toulouse, France, boris.wembe@irit.fr.}%
}

\begin{document}

\maketitle

\begin{abstract}
In this article, based on two case studies, we discuss the role of abnormal geodesics in planar Zermelo navigation problems. Such curves are limit curves of the accessibility set, in the domain where the current is strong. The problem is set in the frame of geometric time optimal control, where the control is the heading angle of the ship and in this context, abnormal curves are shown to separate time minimal curves from time maximal curves and are both small-time minimizing and maximizing. We describe the small-time minimal balls. For bigger time, a cusp singularity can occur in the abnormal direction, which corresponds to a conjugate point along the non-smooth image. It is interpreted in terms of the regularity property of the time minimal value function.
\end{abstract}

{\bf{Keywords}}: 
Geometric optimal control; Zermelo navigation problems; Abnormal geodesics;
Singularity of the value function in the abnormal direction.

\medskip
{2010 MSC}: 
49K15,
49L99,
53C60,
58K50.


\section{Introduction}

A Zermelo navigation problem in the plane can be stated using \cite{BrysonHo:1975}
as a time minimal control problem described by the dynamics
\begin{equation}
    \dot{q}(t) = F_0(q(t)) + \sum_{i=1}^{2} u_i(t) F_i(q(t))
    \label{eq:scl_dynamics}
\end{equation}
where $q = (x, y)$ are the coordinates, $F_i$ being $\xCn{\infty}$-vector fields and
$u = (u_1, u_2)$ is the control, bounded by $\norme{u} = \sqrt{u_1^2 + u_2^2}
\le 1$. The vector field $F_0$ is called the \emph{current} (or drift) while
the \emph{control fields} $F_1$ and $F_2$ define a \emph{Riemannian metric} $g$,
taking $\{F_1, F_2\}$ as an orthonormal frame. If $\norme{u} = 1$, one can set 
$u = (\cos \alpha, \sin \alpha)$ and $\alpha$ is the \emph{heading angle} of the ship.

This type of problems originated from a \emph{historical example} in the frame of calculus of variations, due to Carath\'eodory and Zermelo and where a rather complete analysis was presented
in Refs.~\cite{Caratheodory, Zermelo:1931}. This historical example is one of the 
motivations of this article.
In this example, the dynamics is described by
\begin{equation*}
    F_0(q) = y \frac{\partial }{\partial x}, \quad g = \diff x^2 + \diff y^2,
\end{equation*}
where $g$ is the \emph{Euclidean metric}. Taking an arbitrary metric and considering 
the \emph{weak current case} $\norme{F_0}_g < 1$, with $\norme{\cdot}_g$ the norm associated
to the metric $g$, this problem leads to a Zermelo navigation problem in Finsler geometry
\cite{BaoRoblesShen:2004}.

More recently, in \cite{BCW2019}, a Zermelo navigation problem was analyzed in details,
associated to the evolution of a \emph{passive tracer}, where the current is related to a
\emph{vortex}, centered at the origin of the reference frame.
This problem comes at the origin from hydrodynamics \cite{ArnoldKhesin:1998}. 
Moreover, it is a toy model for the N-body problem, in the frame
of Hamiltonian dynamics \cite{MeyerHallOffin:2009}. In this case, the system evolves on the
punctured plane $\R^2 \setminus \{0\}$, the current being given by
\begin{equation*}
    F_0(q) = \frac{k}{(x^2+y^2)} \left( -y \frac{\partial}{\partial x} + x \frac{\partial}{\partial y} \right)
\end{equation*}
where $k > 0$ is the \emph{circulation parameter} and the control fields being
given by $g = \diff x^2 + \diff y^2$. Using polar coordinates
$q = (r, \theta)$ one has
\begin{equation*}
    F_0(q) = \frac{k}{r^2}\frac{\partial}{\partial \theta}
\end{equation*}
and the Euclidean metric takes the form:
$
    g = \diff r^2 + r^2 \diff \theta^2.
$

These two cases can be set in the same geometric frame by considering in a coordinate system $q = (r, \theta)$, a Zermelo
navigation problem, where the current is in the form 
\begin{equation}
\label{eq:scl_general_current}
    F_0(q) = \mu(r) \frac{\partial}{\partial \theta}
\end{equation}
and where the metric is given by
\begin{equation*}
    g = \diff r^2 + m^2(r) \diff \theta^2, \quad m(r) > 0.
\end{equation*}
Such a metric was the object of many studies in the context of mechanics and Riemannian geometry
and it is called a \emph{metric of revolution} in \emph{Darboux coordinates} $(r, \theta)$, 
where the lines $r = constant$ are the \emph{parallels} and the lines $\theta = constant$
are the \emph{meridians} \cite{Bolsinov:2004}. Note that in \eqref{eq:scl_general_current}, 
the current is along the parallels only, which is sufficient to cover the two founding examples.
We refer to \cite{HKS:2018} for a case study in the differential geometric frame, 
in the case of a \emph{weak current}, that is \emph{Randers problems} in Finsler geometry \cite{BaoRoblesShen:2004}, assuming $\norme{F_0}_g < 1$.
In this article, we focus on the case of a \emph{strong current}, that is $\norme{F_0}_g > 1$.
It was already the case studied in details in the historical example. 
On the opposite, in the vortex problem \cite{BCW2019}, the analysis was concentrated on the
situation where at the initial point, the current is weak, but the tracer can reach the vicinity
of the vortex where the current is strong.

To present our contribution, we must introduce the following concepts from geometric optimal
control. The set of admissible controls $\uset$ is the set of measurable mappings $u$ from $\intervallefo{0}{+\infty}$ to the unit closed Euclidean ball:
$\norme{u} \le 1$, endowed with the $L^\infty$-norm topology.
We denote by $q(\cdot, q_0, u)$ the response of the dynamics \eqref{eq:scl_dynamics} associated to
$u$, with $q(0) = q_0$. Let $t_f \ge 0$, the \emph{accessibility set} from $q_0$ in time $t_f$
is defined by $A(q_0, t_f) = \enstq{q(t_f, q_0, u)}{u \in \uset}$ (if $q(\cdot, q_0, u)$
is defined on the whole $\intervalleff{0}{t_f}$) and this gives the \emph{accessibility set} from
$q_0$ defined by $A(q_0) = \cup_{t_f \ge 0} A(q_0, t_f)$. Thanks to existence theorems in optimal
control, in many cases (and in particular in the two case studies), for each $q_0$, $q_1$
provided, ($q_0$, $q_1\neq 0$ in the vortex case), there exists a time minimal solution to
transfer $q_0$ to $q_1$, and from the \emph{Maximum Principle} \cite{Pontryagin:1962}, candidates
as minimizers are geodesic curves. If such a theorem holds, fixing $q_0$, the \emph{time minimal 
value function} is given by: 
\[
T(q_1) = \inf \enstq{t_f}{q(t_f,q_0,u) = q_1 \;\; \text{and} \;\; u \in \uset}
\]
and the \emph{sphere} $\mathbf{S}(q_0,r)$ of radius $r$ is the set of points $q_1$ which can be reached from $q_0$ in minimum time $r$, while the \emph{ball} of radius $r$ is the set $\mathbf{B}(q_0,r) = \bigcup_{r'\leq r} S(q_0,r')$. 

The aim of this article is double. First of all, and based on \cite{BonnardKupka:1993} we provide the geometric frame from optimal control theory to analyze such Zermelo navigation problems and we make a focus on the role of abnormal geodesics (the limit curves in Carath\'eodory terminology) in such problems. One ingredient is to introduce the \emph{Carath\'eodory-Zermelo-Goh transformation} which amounts to parameterize the geodesics using as accessory control the derivative of the heading angle. This allows to evaluate the accessibility set and its boundary filled by the geodesics, in the abnormal directions, as the image of the extremity mapping, using \emph{semi-normal forms}. 
The second step is to extend this analysis to larger times as shown by the two case studies. A singularity can occur along the abnormal geodesic corresponding to a \emph{cusp point} and associated to a concept of \emph{conjugate point} along non-smooth image of the abnormal geodesic, extending the concept of conjugate point in the smooth case, introduced in \cite{BonnardKupka:1993}. This leads to describe the regularity of the time minimal value function (from generic point of view) in both normal and abnormal cases.
See \cite{CanarsaSinestari:2004,CanarsaRifford:2008} for the relation with singularities of semi-concave functions.

The organization of this article is as follows. In section \ref{sec:scl_PMP}, we introduce the geometric frame from optimal control to analyze in a general case the time minimal solutions, in the context of Hamiltonian dynamics, using the Maximum Principle \cite{Pontryagin:1962}. This provides the parameterization of both \emph{time minimal and time maximal} solutions, which is crucial to understand the role of abnormal geodesics. We introduce the Carath\'eodory-Zermelo-Goh transformation which amounts to extend our dynamics to a \emph{single-input affine system}, and therefore to use the results from \cite{BonnardKupka:1993}, in particular to construct the \emph{tangent model} in order to clarify the role of abnormal geodesics. This from a geometric point of view, in relation with \emph{Lie algebraic computations}. This allows to evaluate the extremity mapping in the abnormal directions, image of the exponential mapping, and its boundary using the concept of Jacobi field. In section \ref{sec:scl_cusp}, using the symmetry of revolution, the geodesics curves can be parameterized, thanks to integrability properties. This is the tool to compute the sphere and ball of general radius and to describe the time minimal value function. In particular, this allows to analyze in a more general context two important features observed in the historical example: the \emph{existence of a cusp singularity for abnormal geodesics} (related to a phenomenon of self-intersections of neighboring geodesics) and the \emph{non-continuity of the value function}. This property is shown to appear when the abnormal geodesics meet the boundary between the two domains defined by $\norme{F_0(q)}_g=1$. This causes a morphogenesis of the accessibility set \cite{Thom:1976}. In the conclusion we present a program of further studies related to this note. First, \emph{models of singularities} of the value function are described, and this can be used in a more general context, since integrability is not a crucial issue in our study. Second, Zermelo navigation problems with symmetry of revolution are an important geometric object of study, in the frame of \emph{integrable} Hamiltonian dynamics, in relation to mathematical physics.

\section{Maximum Principle and evaluation of the accessibility set in the regular geodesic case}
\label{sec:scl_PMP}

\subsection{Maximum Principle}

Consider the Zermelo navigation problem whose dynamics is described by~\eqref{eq:scl_dynamics}. To formulate the Maximum Principle \cite{Pontryagin:1962}, we introduce the \emph{pseudo-Hamiltonian} associated to the cost (extended) system: 
\begin{equation*}
    H(z,u) = H_0(z) + u_1 H_1(z) + u_2 H_2(z) + p^0
\end{equation*}
where $z=(q,p)$, $p=(p_x,p_y)$ being the \emph{adjoint vector}, $H_i(z)=p\cdot F_i(q)$ being, for $i=0,1,2$, the Hamiltonian lift of the vector field $F_i$ ($\cdot$ denotes the standard inner product) and $p^0$ is a constant.
The \emph{maximized} (or true) \emph{Hamiltonian} is given by the maximization condition 
\begin{equation*}
    \Htrue(z) = \max_{\norme{u}\leq 1} H(z,u), 
\end{equation*}
and since $F_1$, $F_2$ form a frame, we have

\begin{proposition}
\begin{enumerate}
\item The maximizing controls are given by 
\begin{equation}
\label{eq:scl_optimal_control}
u_i(z) = \frac{H_i}{\sqrt{H_1^2 + H_2^2}} , \quad i = 1,2.
\end{equation}
\item The maximized Hamiltonian is 
    $\Htrue(z) = H_0(z) + \sqrt{H_1^2+H_2^2} + p^0 = 0.$
%
\item Candidates as time minimizers (resp. maximizers) are solutions of: 
\begin{equation}
\label{eq:scl_Hamiltonian_dynamics}
    \dot{z}(t) = \vv{\Htrue}(z(t)),
%
\quad \text{with} \quad
\vv{\Htrue} = \frac{\partial \Htrue}{\partial p}\frac{\partial}{\partial x} - \frac{\partial \Htrue}{\partial x}\frac{\partial}{\partial p},
\end{equation}
and $p^0 \leq 0$ (resp $\geq 0$) in the time minimal (resp. maximal) case.
\end{enumerate}
\end{proposition}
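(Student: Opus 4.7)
My plan is to read the statement as a direct application of the Pontryagin Maximum Principle to the time-optimal problem with dynamics \eqref{eq:scl_dynamics} and closed unit-disk control constraint; each of the three parts reduces to a short and essentially routine computation.

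First I would invoke the PMP in its standard form: there exists a nontrivial adjoint pair $(p(\cdot),p^0)$, with $p^0\le 0$ in the minimization formulation, such that $z=(q,p)$ satisfies the canonical equations generated by the pseudo-Hamiltonian $H$, the pointwise maximization $H(z(t),u(t))=\max_{\norme{v}\le 1}H(z(t),v)$ holds almost everywhere, and, since the system is autonomous with free terminal time, the maximized value is identically zero along the extremal. Second, the pointwise maximization is a Cauchy--Schwarz argument: $H(z,\cdot)$ is affine in $u$ with linear part $u_1 H_1(z)+u_2 H_2(z)$, so the maximum over the closed unit disk equals $\sqrt{H_1^2+H_2^2}$ and is attained uniquely at $u_i=H_i/\sqrt{H_1^2+H_2^2}$ whenever $(H_1,H_2)\ne(0,0)$. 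Substituting this feedback into $H$ yields the smooth true Hamiltonian $\Htrue(z)=H_0(z)+\sqrt{H_1^2+H_2^2}+p^0$; combined with $\Htrue\equiv 0$ from step one, this is (1)--(2), and the canonical equations for $\Htrue$ then read $\dot z=\vv{\Htrue}(z)$, which is (3) in the minimization case. Third, for time maximization I would use the standard reformulation $\sup t_f=-\inf(-t_f)$: the sign of the cost multiplier $p^0$ in the PMP nontriviality condition is flipped, giving $p^0\ge 0$, while the rest of the derivation is unchanged.

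The only point requiring brief verification, rather than a substantial obstacle, is the well-posedness of the feedback \eqref{eq:scl_optimal_control}, which demands $(H_1,H_2)\ne(0,0)$. Since $\{F_1,F_2\}$ is a frame on the plane, $(H_1,H_2)=0$ is equivalent to $p=0$; combined with $\Htrue=0$ this would force $p^0=0$, contradicting PMP nontriviality. Hence $(H_1,H_2)\ne 0$ everywhere along any extremal and the formula for $u(z)$ is unambiguous. The abnormal regime $p^0=0$, far from being excluded by this remark, is precisely characterised by $\sqrt{H_1^2+H_2^2}=-H_0$ and singles out the curves that separate time-minimizing from time-maximizing geodesics, which is the phenomenon driving the rest of the paper.
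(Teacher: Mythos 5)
Your proposal is correct and follows exactly the route the paper intends: the statement is a direct instance of the Pontryagin Maximum Principle for a free-final-time autonomous problem, with the maximization over the unit disk resolved by Cauchy--Schwarz and the non-vanishing of $(H_1,H_2)$ guaranteed by the frame condition on $\{F_1,F_2\}$ together with the nontriviality of $(p,p^0)$. The paper gives no explicit proof beyond the remark ``since $F_1$, $F_2$ form a frame,'' and your argument supplies precisely the missing routine details, including the correct sign convention for $p^0$ in the time-maximal case.
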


\begin{defi}
An \emph{extremal} is a solution $z(\cdot) = (q(\cdot),p(\cdot))$ of \eqref{eq:scl_Hamiltonian_dynamics} and a projection of an extremal is called a \emph{geodesic}. It is called \emph{strict} if $p$ is unique up to a factor, \emph{normal} if $p^0 \neq 0$ and abnormal (or exceptional) if $p^0=0$. In the normal case it is called \emph{hyperbolic} (resp. \emph{elliptic}) if $p^0<0$ (resp. $p^0>0$).
\end{defi}

Next we relate geodesic curves to singularities of the extremity mapping, which is an important issue in our analysis, see \cite[Chapter~3]{BonnardChyba:2003} for a general context and details.

\begin{defi}
Restrict the control domain to the set $\norme{u}=1$. Let $q(\cdot,q_0,u)$ be the response of $u$, with $q(0) = q_0$. Fixing $q_0$, the \emph{extremity mapping} is the map: $E^{q_0,\cdot} \colon u \mapsto q(\cdot, q_0,u)$ and the \emph{fixed time extremity mapping} (at time $T$) is the map: $E^{q_0,T} \colon u \mapsto q(T, q_0,u)$.  
\end{defi}

\begin{proposition}
Take a reference extremal $z(\cdot) = (q(\cdot),p(\cdot))$ on $[0,T]$, where the corresponding control is given by \eqref{eq:scl_optimal_control}. If we endow the set of controls (valued in $\norme{u}=1$) with the $L^\infty$-norm topology we have:
\begin{enumerate}
\item In the normal case, $u$ is a singularity of the fixed time extremity mapping, that is the image of the Fr\'echet derivative is not of maximal rank.
\item In the abnormal case, $u$ is a singularity of the extremity mapping.
\end{enumerate}
\end{proposition}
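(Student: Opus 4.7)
My plan is to show that in both cases the adjoint vector $p(T)$ realises a non-trivial linear functional on $\R^2$ that annihilates the image of the relevant Fr\'echet derivative; the lack of maximal rank then follows immediately.

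First I would linearise the dynamics along the reference trajectory. A variation $\delta u$ of the control $u$ compatible with $\norme{u}=1$ must satisfy the pointwise tangency condition $u(t)\cdot \delta u(t) = 0$ a.e. The induced first-order variation $\xi(\cdot)$ of the response is governed by
\[
\dot\xi(t) = A(t)\,\xi(t) + \sum_{i=1}^{2} \delta u_i(t)\, F_i(q(t)), \quad \xi(0)=0,
\]
where $A(t)$ is the Jacobian in $q$ of $F_0 + u_1 F_1 + u_2 F_2$ evaluated along the reference. Variation-of-constants then expresses $\mathrm{d}E^{q_0,T}(u)\cdot \delta u = \xi(T)$ as an explicit integral against the resolvent of the homogeneous system.

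Next, I would exploit that the adjoint equation $\dot p = -p\, A$ is dual to the variational system. A short computation yields the identity
\[
p(T)\cdot \xi(T) = \int_0^T \bigl(H_1(t)\,\delta u_1(t) + H_2(t)\,\delta u_2(t)\bigr)\diff t.
\]
By the maximisation condition~\eqref{eq:scl_optimal_control}, $(H_1,H_2)$ is parallel to $u$, so the integrand equals $\sqrt{H_1^2+H_2^2}\,(u\cdot \delta u) \equiv 0$ on admissible variations, giving $p(T)\perp \xi(T)$ for every $\delta u$. To conclude item~1 I need $p(T)\neq 0$: in the normal case this is automatic because $(p,p^0)\neq 0$ and the linear homogeneous adjoint ODE propagates any non-zero value. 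Hence the image of $\mathrm{d}E^{q_0,T}(u)$ lies in the line $p(T)^\perp\subset\R^2$, which is the claimed singularity.

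Item~2 needs one extra observation: the free-time extremity mapping $E^{q_0,\cdot}$ incorporates the end-time direction, so its derivative also contains $\dot q(T) = F_0(q(T))+\sum u_i F_i(q(T))$. Pairing against $p(T)$ gives
\[
p(T)\cdot \dot q(T) = H_0(z(T)) + \sqrt{H_1^2+H_2^2}(z(T)) = -p^0,
\]
by the identity $\Htrue\equiv 0$. In the abnormal case $p^0=0$, so this extra direction also lies in $p(T)^\perp$, and the entire image is again confined to a line. The main delicate point in both items is the non-vanishing of $p(T)$; in the abnormal case the PMP non-triviality $(p,p^0)\neq 0$ together with $p^0=0$ forces $p\not\equiv 0$, and uniqueness for the linear adjoint ODE propagates this to every time, including $T$.
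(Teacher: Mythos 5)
The paper itself gives no proof of this proposition; it simply refers the reader to \cite[Chapter~3]{BonnardChyba:2003}. Your argument is the standard one that reference would supply, and it is essentially correct: the duality identity $\frac{\diff}{\diff t}(p\cdot\xi)=\sum_i H_i\,\delta u_i$, the collinearity of $(H_1,H_2)$ with $u$ forcing the integrand to vanish on tangential variations $u\cdot\delta u=0$, and the extra free-time direction $\dot q(T)$ paired against $p(T)$ giving $-p^0$, which vanishes exactly in the abnormal case. The one imprecise step is your justification that $p(T)\neq 0$ in the normal case: the non-triviality condition $(p,p^0)\neq(0,0)$ is automatically satisfied there by $p^0\neq 0$ alone and so does \emph{not} rule out $p\equiv 0$. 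The correct one-line fix is to invoke $\Htrue\equiv 0$: if $p(T)=0$ then $p\equiv 0$ by the linear homogeneous adjoint equation, whence $H_0=H_1=H_2=0$ and $\Htrue=p^0=0$, contradicting normality (alternatively, the hypothesis that the control is given by \eqref{eq:scl_optimal_control} already requires $H_1^2+H_2^2>0$, hence $p\neq 0$). With that repair the proof is complete and matches the intended argument.
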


\begin{defi}
Let $t \mapsto q(t)$ be a response of \eqref{eq:scl_dynamics}. It is called \emph{regular} if it is a one-to-one immersion. From the Maximum Principle, the geodesics are parameterized by the initial heading angle $\alpha_0$ and fixing $q(0)=q_0$, the \emph{exponential mapping} is $\exp_{q_0,t} \colon \alpha_0 \mapsto \Pi(\exp(t\vv{\Htrue})(q_0,\alpha_0))$ where $\Pi : (q,p) \mapsto q$ is the q-projection. Take a strict normal geodesic $q(\cdot)$, a \emph{conjugate point} along $q(\cdot)$ is a point where the exponential mapping is not an immersion and taking all such geodesics, the set of \emph{first} conjugate points forms the \emph{conjugate locus} $C(q_0)$. The \emph{cut point} along a given geodesic is the point where the geodesic loses optimality and they form the \emph{cut locus} $\Sigma(q_0)$. The \emph{separating line} $L(q_0)$ is the set of points where two minimizing geodesics starting from $q_0$ are intersecting.
\end{defi}

\subsection{Carath\'eodory-Zermelo-Goh transformation and accessibility set}
\label{sec:scl_CZG}

In the historical example \cite{Caratheodory}, Carath\'eodory integrated the dynamics of the heading angle $\alpha$ to parameterize the geodesics. This corresponds to the Goh transformation in optimal control and this will be crucial in our study to set Zermelo navigation problems in the Lie algebraic frame. The introduction of the so-called Goh extension in optimal control leads to set the problem in the context of geometric optimal control and permits to use intrinsic computations with Lie brackets, applying the results from \cite{BonnardKupka:1993}, and this is the first contribution of the paper.

\begin{defi}
Consider the control system \eqref{eq:scl_dynamics}, with $q=(x,y)$ and $\norme{u} = 1$. One can set $u=(\cos\alpha,\sin\alpha)$, $\alpha$ being the heading angle of the ship. Denote $\tilde{q} = (q,\alpha)$, $X(\tilde{q}) = F_0(q) + \cos\alpha\,F_1(q) + \sin\alpha\,F_2(q)$ and $Y(\tilde{q}) = \frac{\partial}{\partial \alpha}$. This leads to prolongate \eqref{eq:scl_dynamics} into the \emph{single-input affine system}: 
\begin{equation}
\label{eq:scl_extend_system}
\dot{\tilde{q}} = X(\tilde{q}) + v\, Y(\tilde{q})
\end{equation}
and the derivative of the heading angle $v=\dot{\alpha}$ is the \emph{accessory control}, $v$ belonging to the whole $\R$.
\end{defi}

We refer to \cite{BonnardKupka:1993} for a presentation of such a transformation in a general context. In this prolongation, the extremal curves $z=(q,p)$ extend to \emph{singular extremal curves} associated to \eqref{eq:scl_extend_system} with coordinates $\tilde{z} = (\tilde{q}, \tilde{p}) = (q,\alpha,p,p_\alpha)$. This leads to define the extended Hamiltonian:
\begin{equation*}
\tilde{H}(\tilde{z}, v) = \tilde{p}\cdot (X(\tilde{q}) + v\, Y(\tilde{q})) + p^0. 
\end{equation*}
From \cite[Chapter~6]{BonnardChyba:2003}, using the Maximum Principle we obtain the following parameterization of the geodesic curves.
%
 Let $\gamma$ be a reference geodesic for the extended system defined on $[0,T]$. We assume the following:
\begin{itemize}
\item[\textbf{(A0)}] The q-projection of $\gamma$ is regular.
\end{itemize}

Computing the Lie bracket with the convention $\liebra{X}{Y}(\tilde{q}) = \frac{\partial X}{\partial \tilde{q}}(\tilde{q}) Y(\tilde{q}) - \frac{\partial Y}{\partial \tilde{q}}(\tilde{q}) X(\tilde{q})$, then straightforward computations give that under assumption (A0) the following holds along $\gamma$.

%
\begin{itemize}
\item[\textbf{(A1)}] $X$, $Y$ are linearly independent.
\item[\textbf{(A2)}] $Y$, $\liebra{X}{Y}$ are linearly independent and the reference geodesic is strict.
\item[\textbf{(A3)}] The strict generalized Legendre-Clebsch condition ($ \frac{\partial}{\partial v} \frac{\diff^2}{\diff t^2} \frac{\partial \tilde{H}}{\partial v} \neq 0$) is satisfied and so we have: $\liebra{\liebra{Y}{X}}{Y} \notin \mathrm{Span}\{Y,\liebra{Y}{X}\}$.
\end{itemize}
\begin{remark}
In a Zermelo navigation problem, the collinearity set of $X$ and $Y$ is: $\norme{F_0(q)}_g=1$.
\end{remark}
Hence from \cite[Chapter~6]{BonnardChyba:2003}, the control $v$ associated to $\gamma$ can be computed as the feedback
\begin{equation*}
v(\tilde{q}) = -\frac{D'(\tilde{q})}{D(\tilde{q})},
\end{equation*}
where we denote
\begin{equation*}
\begin{aligned}
D   = \det( Y, \liebra{Y}{X}, \liebra{\liebra{Y}{X}}{Y}  ) ~~ \text{and} ~~ 
D'  = \det( Y, \liebra{Y}{X}, \liebra{\liebra{Y}{X}}{X}  ).
\end{aligned}
\end{equation*}
Moreover, introducing
$
D'' = \det( Y, \liebra{Y}{X}, X), 
$
we have the following.
\begin{proposition}
\label{prop:scl_extremal_parameterization}
The hyperbolic geodesics are contained in $D D'' > 0$,
the elliptic geodesics are contained in $D D'' < 0$
and the abnormal (or exceptional) geodesics are located in $D'' = 0$.
\end{proposition}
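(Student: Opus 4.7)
The plan is to lift the problem to the single-input affine system~\eqref{eq:scl_extend_system} via the Carath\'eodory-Zermelo-Goh transformation, where the reference geodesic becomes a singular extremal, and then to convert the scalar products $\tilde p\cdot W$ appearing in the Pontryagin analysis into the $3\times 3$ determinants defining $D$, $D'$ and $D''$. The starting point is that Pontryagin maximization of $\tilde H$ over the unbounded accessory control forces $H_Y=\tilde p\cdot Y=0$, and differentiating this identity along the extremal flow yields $H_{[X,Y]}=\tilde p\cdot[X,Y]=0$. Assumption (A2) gives $Y$ and $[Y,X]$ linearly independent, and since the extended state space is $3$-dimensional the annihilator of $\mathrm{Span}\{Y,[Y,X]\}$ is one-dimensional; hence there exists a nonzero scalar $c=c(\tilde z)$ such that
\begin{equation*}
\tilde p\cdot W \;=\; c\,\det\bigl(Y,\,[Y,X],\,W\bigr)
\end{equation*}
for every smooth vector field $W$ on the extended state space.

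I would then evaluate this identity at two specific fields. Taking $W=X$, the Pontryagin equality $\tilde H=0$ combined with the singularity condition $H_Y=0$ gives $\tilde p\cdot X=-p^{0}$, hence $c\,D''=-p^{0}$. Taking $W=[[Y,X],Y]$ requires a bracket computation in extended coordinates $(x,y,\alpha)$ with $Y=\partial/\partial\alpha$ and $X=F_0+\cos\alpha\,F_1+\sin\alpha\,F_2$; one obtains $[Y,X]=-\sin\alpha\,F_1+\cos\alpha\,F_2$ and then $[[Y,X],Y]=\cos\alpha\,F_1+\sin\alpha\,F_2$, which is exactly the $q$-component of the maximizing control field of~\eqref{eq:scl_optimal_control}. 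Consequently
\begin{equation*}
c\,D \;=\; \cos\alpha\,H_1+\sin\alpha\,H_2 \;=\; \sqrt{H_1^{2}+H_2^{2}}.
\end{equation*}

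Multiplying these two identities gives $c^{2}DD''=-p^{0}\sqrt{H_1^{2}+H_2^{2}}$. The non-triviality clause $(p,p^{0})\neq 0$ rules out $p=0$, which would force $\Htrue=p^{0}$ and hence $p^{0}=0$; therefore $p\neq 0$ and $\sqrt{H_1^{2}+H_2^{2}}>0$ since $\{F_1,F_2\}$ is a frame. It follows that $\sign(DD'')=\sign(-p^{0})$, giving the announced trichotomy: $DD''>0$ on the hyperbolic locus ($p^{0}<0$), $DD''<0$ on the elliptic locus ($p^{0}>0$), and $DD''=0$ in the exceptional case $p^{0}=0$; since the identity $cD=\sqrt{H_1^{2}+H_2^{2}}>0$ already forces $c\neq 0$, the vanishing necessarily reduces to $D''=0$ rather than to $D=0$.

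The only step that is not pure bookkeeping is the third-order bracket identification $[[Y,X],Y]=\cos\alpha\,F_1+\sin\alpha\,F_2$: it is this equality, linking an iterated bracket in the prolonged system to the first-order maximizing control vector, that pins down both the sign and the non-vanishing of $cD$ on normal arcs and makes the classification via $\sign(DD'')$ possible, without any further appeal to the Legendre-Clebsch inequality beyond the non-degeneracy stated in (A3).
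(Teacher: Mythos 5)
Your argument is correct and is essentially the proof the paper itself relies on: the proposition is stated without proof and deferred to \cite{BonnardKupka:1993} and \cite[Chapter~6]{BonnardChyba:2003}, and what you have written out --- $\tilde p$ proportional to $\det(Y,[Y,X],\cdot\,)$ by (A2), then $cD''=H_X=-p^0$ and $cD=H_{[[Y,X],Y]}=\cos\alpha\,H_1+\sin\alpha\,H_2=\sqrt{H_1^2+H_2^2}>0$, hence $\sign(DD'')=\sign(-p^0)$ --- is exactly the standard classification of singular extremals of a three-dimensional single-input affine system, specialized to the Zermelo frame. The only blemish is that your intermediate bracket $[Y,X]=-\sin\alpha\,F_1+\cos\alpha\,F_2$ carries the opposite sign to the paper's stated convention (which gives $\sin\alpha\,F_1-\cos\alpha\,F_2$, as in the explicit computations of the rotational case); this is harmless here because $[[Y,X],Y]$, the product $DD''$ and the locus $D''=0$ are all invariant under that sign flip.
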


The important application is to use the fine computations detailed in \cite{BonnardKupka:1993}, \cite[Chapter~6]{BonnardChyba:2003} to evaluate in our problem the extremity mapping in the neighborhood of a reference geodesic curve $\gamma$, using \emph{semi-normal forms}, for the action of the feedback group. This gives evaluation of the accessibility sets and \emph{their boundaries}, filled by geodesic curves. 

\subsubsection{Semi-normal forms}

We proceed as follows. For a reference geodesic curve $t \mapsto \gamma(t)$ on $[0,T]$, under the action of the \emph{feedback group}, one can identify $\gamma$ to $t\mapsto (t,0,0)$ and it can be taken as the response of $v\equiv 0$.
Normalization is then obtained in the jet spaces of (X,Y), in the neighborhood of $\gamma$. This is convenient to distinguish normal and abnormal cases.

\paragraph{\textbf{Normal case}} We can choose coordinates $\tilde{q} = (q_1,q_2,q_3)$ such that:
\begin{equation}
\label{eq:scl_forme_normal}
\begin{aligned}
X = \left(1 + \sum_{i,j=2}^{3} a_{i,j}(q_1)q_iq_j\right)\frac{\partial}{\partial q_1} + q_3\frac{\partial}{\partial q_2} + \varepsilon_1, \quad 
Y = \frac{\partial}{\partial q_3},
\end{aligned}
\end{equation}
with $a_{33} <0$ (resp. $a_{33} >0$) in the hyperbolic (resp. elliptic) case.

\paragraph{\textbf{Abnormal case}} We can choose coordinates $\tilde{q} = (q_1,q_2,q_3)$ such that:
\begin{equation}
\label{eq:scl_forme_normal_abnormal}
\begin{aligned}
X = \left(1 + q_2\right)\frac{\partial}{\partial q_1} + \frac{1}{2}a(q_1)q_2^2\frac{\partial}{\partial q_3} + \varepsilon_2, \quad 
Y = \frac{\partial}{\partial q_2}.
\end{aligned}
\end{equation}
Again, see \cite{BonnardKupka:1993} for details of the computations and the descriptions of the mappings $q \mapsto \varepsilon_1(q)$, $\varepsilon_2(q)$. Taking $\varepsilon_i = 0$ and $q_1 = t$ in \eqref{eq:scl_forme_normal} and \eqref{eq:scl_forme_normal_abnormal}, one can evaluate the accessibility set and its boundary and compute conjugate points (in the regular case) to deduce the optimality status of the reference geodesic.

\subsubsection{Optimality status: normal case}

Using the normalization in \eqref{eq:scl_forme_normal} one sets: $q_1(t) = t + w_1(t)$ and projection of the accessibility set in $w_1$-direction is represented on Fig.~\ref{fig:accessibility_scl_1}. Note that hyperbolic and elliptic geodesics amount respectively to minimize and maximize the $w_1$-coordinate. If $t>t_{1c}$ (first conjugate time) the fixed extremity mapping becomes open.

\begin{figure}[ht!]
    \centering
    \begin{tikzpicture}[scale=0.3]
        \def\xh{0.3cm}
        \def\yh{0.3cm}
        \def\ystickz{0.8em}
        \coordinate (O)    at (0,0);
        \coordinate (Ox-)  at (0, 0);
        \coordinate (Ox+)  at (10, 0);
        \coordinate (Oy)   at (0, 10);
        \coordinate (tc)   at (6, 0);
    
        \coordinate (A)   at (8.0, 8.0);
        \coordinate (B)   at (3.0, 3.0);
        \coordinate (C)   at (5.0, 5.0);
        \coordinate (D)   at (7.0, 7.0);
        \coordinate (B1)  at (3.0, 7.5);
        \coordinate (C1)  at (5.0, 9.5);
        \coordinate (D1)  at (7.0, 10.5);
        \coordinate (D2)  at (7.0, 3.5);
    
        \draw (Ox+) node[below right] {$t$};
        \draw (Oy)  node[left] {$w_1$};
        \draw (O)   node[below left] {0};
        
        \draw [->,thin,>=latex] ([xshift=-1cm] Ox-) -- (Ox+);
        \draw [->,thin,>=latex] ([yshift=-1cm] O) --   (Oy);
    
        \draw [color=blue, thick]   (O)  to (A) ;
        \draw [color=black, thick]  (B)  to (B1) ;
        \draw [color=black, thick]  (C)  to (C1) ;
        \draw [color=black, thick]  (D2) to (D1) ;
        \draw [color=black, thick]  (D)  to (8.0, 8.0) ;
        
        \def\dy{1.0cm}
        \draw [black, thick] ([xshift=-\xh, yshift=-\yh+1*\dy] B) -- ([xshift=\xh, yshift=\yh+1*\dy] B);
        \draw [black, thick] ([xshift=-\xh, yshift=-\yh+2*\dy] B) -- ([xshift=\xh, yshift=\yh+2*\dy] B);
        \draw [black, thick] ([xshift=-\xh, yshift=-\yh+3*\dy] B) -- ([xshift=\xh, yshift=\yh+3*\dy] B);
        \draw [black, thick] ([xshift=-\xh, yshift=-\yh+4*\dy] B) -- ([xshift=\xh, yshift=\yh+4*\dy] B);
        
        \def\dy{1.0cm}
        \draw [black, thick] ([xshift=-\xh, yshift=-\yh+1*\dy] C) -- ([xshift=\xh, yshift=\yh+1*\dy] C);
        \draw [black, thick] ([xshift=-\xh, yshift=-\yh+2*\dy] C) -- ([xshift=\xh, yshift=\yh+2*\dy] C);
        \draw [black, thick] ([xshift=-\xh, yshift=-\yh+3*\dy] C) -- ([xshift=\xh, yshift=\yh+3*\dy] C);
        \draw [black, thick] ([xshift=-\xh, yshift=-\yh+4*\dy] C) -- ([xshift=\xh, yshift=\yh+4*\dy] C);
        
        \def\dy{1.0cm}
        \draw [black, thick] ([xshift=-\xh, yshift=-\yh-1*\dy] D) -- ([xshift=\xh, yshift=\yh-1*\dy] D);
        \draw [black, thick] ([xshift=-\xh, yshift=-\yh-2*\dy] D) -- ([xshift=\xh, yshift=\yh-2*\dy] D);
        \draw [black, thick] ([xshift=-\xh, yshift=-\yh-3*\dy] D) -- ([xshift=\xh, yshift=\yh-3*\dy] D);
        \draw [black, thick] ([xshift=-\xh, yshift=-\yh+1*\dy] D) -- ([xshift=\xh, yshift=\yh+1*\dy] D);
        \draw [black, thick] ([xshift=-\xh, yshift=-\yh+2*\dy] D) -- ([xshift=\xh, yshift=\yh+2*\dy] D);
        \draw [black, thick] ([xshift=-\xh, yshift=-\yh+3*\dy] D) -- ([xshift=\xh, yshift=\yh+3*\dy] D);
        

        %
        \shade[ball color=blue] (B) circle (0.2);
        \shade[ball color=blue] (C) circle (0.2);
        \shade[ball color=blue] (D) circle (0.2);
        
        \draw [black] ([yshift=\ystickz] tc) to ([yshift=-\ystickz] tc) ;
        \draw [black, thin, densely dotted] ([yshift=10cm] tc) -- (tc) ;
        \draw (tc)   node[below] {$t_{1c}$};
        
        \draw (5.0,-2.0) node[below]{elliptic case};
            
    \end{tikzpicture}
    \hspace{2em}
    \begin{tikzpicture}[scale=0.3]
        \def\xh{0.3cm}
        \def\yh{0.3cm}
        \def\ystickz{0.8em}
        \coordinate (O)    at (0,0);
        \coordinate (Ox-)  at (0, 0);
        \coordinate (Ox+)  at (10, 0);
        \coordinate (Oy)   at (0, 10);
        \coordinate (tc)   at (6, 0);
    
        \coordinate (A)   at (8.0, 8.0);
        \coordinate (B)   at (3.0, 3.0);
        \coordinate (C)   at (5.0, 5.0);
        \coordinate (D)   at (7.0, 7.0);
        \coordinate (B1)  at (3.0, 0.5);
        \coordinate (C1)  at (5.0, 1.5);
        \coordinate (D1)  at (7.0, 10.5);
        \coordinate (D2)  at (7.0, 3.5);
    
        \draw (Ox+) node[below right] {$t$};
        \draw (Oy)  node[left] {$w_1$};
        \draw (O)   node[below left] {0};
    
        \draw [->,thin,>=latex] ([xshift=-1cm] Ox-) -- (Ox+);
        \draw [->,thin,>=latex] ([yshift=-1cm] O) --   (Oy);
    
        \draw [color=red, thick]   (O) to (A) ;
        \draw [color=black, thick]   (B) to (B1) ;
        \draw [color=black, thick]   (C) to (C1) ;
        \draw [color=black, thick]   (D2) to (D1) ;
        \draw [color=black, thick]   (D) to (8.0, 8.0) ;
        
        \def\dy{1.0cm}
        \draw [black, thick] ([xshift=-\xh, yshift=-\yh-1*\dy] B) -- ([xshift=\xh, yshift=\yh-1*\dy] B);
        \draw [black, thick] ([xshift=-\xh, yshift=-\yh-2*\dy] B) -- ([xshift=\xh, yshift=\yh-2*\dy] B);
        
        \def\dy{1.0cm}
        \draw [black, thick] ([xshift=-\xh, yshift=-\yh-1*\dy] C) -- ([xshift=\xh, yshift=\yh-1*\dy] C);
        \draw [black, thick] ([xshift=-\xh, yshift=-\yh-2*\dy] C) -- ([xshift=\xh, yshift=\yh-2*\dy] C);
        \draw [black, thick] ([xshift=-\xh, yshift=-\yh-3*\dy] C) -- ([xshift=\xh, yshift=\yh-3*\dy] C);
        
        \def\dy{1.0cm}
        \draw [black, thick] ([xshift=-\xh, yshift=-\yh-1*\dy] D) -- ([xshift=\xh, yshift=\yh-1*\dy] D);
        \draw [black, thick] ([xshift=-\xh, yshift=-\yh-2*\dy] D) -- ([xshift=\xh, yshift=\yh-2*\dy] D);
        \draw [black, thick] ([xshift=-\xh, yshift=-\yh-3*\dy] D) -- ([xshift=\xh, yshift=\yh-3*\dy] D);
        \draw [black, thick] ([xshift=-\xh, yshift=-\yh+1*\dy] D) -- ([xshift=\xh, yshift=\yh+1*\dy] D);
        \draw [black, thick] ([xshift=-\xh, yshift=-\yh+2*\dy] D) -- ([xshift=\xh, yshift=\yh+2*\dy] D);
        \draw [black, thick] ([xshift=-\xh, yshift=-\yh+3*\dy] D) -- ([xshift=\xh, yshift=\yh+3*\dy] D);
            
        %
        
        \shade[ball color=red] (B) circle (0.2);
        \shade[ball color=red] (C) circle (0.2);
        \shade[ball color=red] (D) circle (0.2);
        
        \draw [black] ([yshift=\ystickz] tc) to ([yshift=-\ystickz] tc) ;
        \draw [black, thin, densely dotted] ([yshift=10cm] tc) -- (tc) ;
        \draw (tc)   node[below] {$t_{1c}$};
        
        \draw (5.0,-2.0) node[below]{hyperbolic case};
        
\end{tikzpicture}
\vspace{-0.5cm}
\caption{Projection of the fixed time accessibility set on the $w_1$-coordinate.
}
\vspace{-0.3cm}
\label{fig:accessibility_scl_1}
\end{figure}
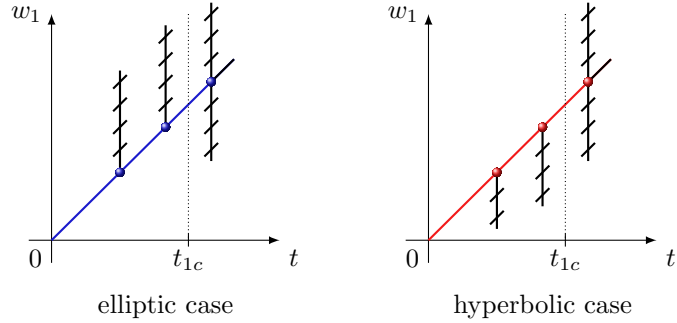

\begin{proposition}
Under the assumptions, in the hyperbolic (resp. elliptic) case, the reference geodesic $\gamma$ is time minimizing (resp. maximizing) with respect to all trajectories of the system, contained in a conic neighborhood of the reference curve, if the final time is strictly less than the first conjugate time $t_{1c}$.
\end{proposition}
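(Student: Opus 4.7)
The approach is to pass to the semi-normal form \eqref{eq:scl_forme_normal}, in which the reference geodesic $\gamma$ becomes the line $t\mapsto(t,0,0)$ on $[0,T]$ and corresponds to the accessory control $v\equiv 0$, so that the time-optimality question reduces to the sign of a quadratic index form evaluated on variations, and then to invoke the classical Jacobi--Morse sufficient condition tied to the first conjugate time $t_{1c}$.

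First, I would pick a competing admissible trajectory $\tilde q(\cdot)=(q_1,q_2,q_3)(\cdot)$ of the extended system \eqref{eq:scl_extend_system}, generated by a control $v(\cdot)$ in a small conic neighborhood of $v\equiv 0$, with $\tilde q(0)=(0,0,0)$ and $\tilde q(T')=\gamma(T)=(T,0,0)$; by continuity of the flow $T'$ is close to $T$. Integrating the semi-normal form dynamics $\dot q_3 = v + O(|\tilde q|^2)$, $\dot q_2 = q_3 + O(|\tilde q|^2)$ and $\dot q_1 = 1 + \sum_{i,j=2}^{3} a_{ij}(q_1)q_i q_j + O(|\tilde q|^3)$, the matching condition $q_1(T')=T$ gives
\[
T - T' = \int_0^{T'} \sum_{i,j=2}^{3} a_{ij}(q_1(s))\, q_i(s) q_j(s)\, ds + o(\|v\|^2).
\]
To leading order, the sign of $T'-T$ is thus opposite to that of the \emph{accessory index form}
\[
J(v) = \int_0^T \bigl(a_{22}(s) q_2^2 + 2 a_{23}(s) q_2 q_3 + a_{33}(s) q_3^2\bigr)\,ds,
\]
evaluated on variations $(q_2,q_3)$ propagated from $v$ by the linearized dynamics $\dot q_2=q_3$, $\dot q_3=v$, with $q_2(0)=q_3(0)=0$ and linearized terminal data $q_2(T)=q_3(T)=0$ (imposed up to higher order by $\tilde q(T')=\gamma(T)$).

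Next, I would recognize $J$ as the second variation attached to $\gamma$: its Euler--Lagrange system is the Jacobi equation, whose first focal time on $[0,+\infty)$ is precisely $t_{1c}$. Assumption (A3), which in these coordinates reads $a_{33}(s)\neq 0$ of constant sign, combined with the hypothesis $T<t_{1c}$, allows an appeal to the Jacobi--Morse theorem to conclude that $J$ is strictly definite on nontrivial admissible variations, of the sign of $a_{33}$: negative in the hyperbolic case ($a_{33}<0$) and positive in the elliptic case ($a_{33}>0$). Consequently $T'-T$ carries the opposite sign, so $T'>T$ in the hyperbolic case (local time-minimality of $\gamma$) and $T'<T$ in the elliptic case (local time-maximality). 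The cubic remainder is absorbed by strict definiteness provided the conic neighborhood is chosen small enough.

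The main difficulty lies in the Jacobi--Morse step: promoting the mere absence of conjugate points on $[0,T]$ with $T<t_{1c}$ to \emph{strict} definiteness of the constrained quadratic form $J$ on the variation space cut out by the linearized terminal conditions, and then showing that this second-order dominance controls the cubic remainder uniformly over the conic neighborhood. Both points follow the single-input affine machinery developed in \cite{BonnardKupka:1993} and detailed in \cite[Chapter~6]{BonnardChyba:2003}, here applied after the Carath\'eodory--Zermelo--Goh extension.
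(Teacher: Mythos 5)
Your proposal is correct and follows essentially the same route as the paper, which also reduces to the semi-normal form \eqref{eq:scl_forme_normal}, reads time-optimality off the sign of the quadratic term governing the $w_1$-coordinate of the accessibility set, and defers the Jacobi--Morse/conic-neighborhood technicalities to \cite{BonnardKupka:1993} and \cite[Chapter~6]{BonnardChyba:2003}. The paper gives no more detail than you do on the delicate point you flag (definiteness of the constrained index form for $T<t_{1c}$ and uniform control of the remainder over the conic neighborhood), so your account is, if anything, a more explicit version of the paper's own sketch.
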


\subsubsection{Optimality status: abnormal case} 
\label{sec:optimality_abnormal}

In this case, one must estimate the \emph{time evolution} of the accessibility set and its boundary. It is represented on Fig.~\ref{fig:accessibility_scl_3}. The reference geodesic is $\gamma \colon t \mapsto (t,0,0)$ and is associated to $v\equiv 0$. We fix $t$ along the reference curve and let a time $t_f$ in a neighborhood of $t$. Using the model, we compute geodesics such that:
\begin{equation*}
    q_1(t_f) = t, \quad q_2(t_f) = 0,
\end{equation*}
and the associated cost is given by
%
    $q_3(t_f) = \frac{1}{2} \int_0^{t_{f}} a(q_1)q_2^2\, \diff t.$
%
Note that if we restrict to geodesics, this amounts to use the Jacobi (variational) equation, along the reference geodesic. One has:
\begin{equation}
\label{eq:scl_abnormal_form}
    q_3(t_f) = \alpha(t-t_f)^2 + o(t-t_f)^3,
\end{equation}
$\alpha$ being a positive invariant, given by the Jacobi equation.

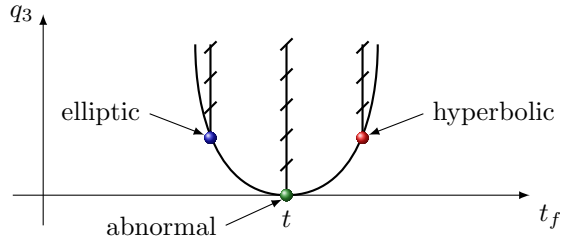
\begin{figure}[ht!]
\centering
       \begin{tikzpicture}[scale=0.4]
       
            \def\ystickz{0.5em}
       
            \coordinate (O)    at (0 , 0);
            \coordinate (Ox-)  at (-9, 0);
            \coordinate (Ox+)  at ( 8, 0);
            \coordinate (Oy)   at ( 0, 5);
            \coordinate (tc)   at ( 5, 0);
        
            \coordinate (O1)   at ( -8, -1);
            \coordinate (O2)   at ( -8,  6);
        
            \coordinate (A)   at (-3.0, 5.0);
            \coordinate (B)   at ( 3.0, 5.0);
            \coordinate (C)   at (-2.5, 2.0);
            \coordinate (D)   at ( 2.5, 2.0);
            \coordinate (C1)  at (-2.5, 5.2);
            \coordinate (C2)  at ( 2.5, 5.2);
        
            \draw (Ox+) node[below right] {$t_f$};
            \draw (O2)  node[left] {$q_3$};
        
            \draw [->,thin,>=latex] (Ox-) -- (Ox+);
            \draw [->,thin,>=latex] (O1) -- (O2);
        
            \draw [color=black, thick] (A) to[out=270,in=180] (O) ;
            \draw [color=black, thick] (O) to[out=0,in=270] (B) ;
            
            \def\xh{0.2cm}
            \def\yh{0.2cm}
        
            \coordinate (E)  at (-2.5,1.9);
            \coordinate (E1) at (-2.5,5.0);
            \draw [color=black, thick] (E) to (E1) ;
            \def\dy{1.0cm}
            \draw [black, thick] ([xshift=-\xh, yshift=-\yh+1*\dy] E) -- ([xshift=\xh, yshift=\yh+1*\dy] E);
            \draw [black, thick] ([xshift=-\xh, yshift=-\yh+2*\dy] E) -- ([xshift=\xh, yshift=\yh+2*\dy] E);
            \draw [black, thick] ([xshift=-\xh, yshift=-\yh+3*\dy] E) -- ([xshift=\xh, yshift=\yh+3*\dy] E);
            
            \coordinate (H)  at (2.5,1.9);
            \coordinate (H1) at (2.5,5.0);
            \draw [color=black, thick] (H) to (H1) ;
            \draw [black, thick] ([xshift=-\xh, yshift=-\yh+1*\dy] H) -- ([xshift=\xh, yshift=\yh+1*\dy] H);
            \draw [black, thick] ([xshift=-\xh, yshift=-\yh+2*\dy] H) -- ([xshift=\xh, yshift=\yh+2*\dy] H);
            \draw [black, thick] ([xshift=-\xh, yshift=-\yh+3*\dy] H) -- ([xshift=\xh, yshift=\yh+3*\dy] H);
            
            \coordinate (Ab)  at (0.0,0.0);
            \coordinate (Ab1) at (0.0,5.0);
            \draw [color=black, thick] (Ab) to (Ab1) ;
            \draw [black, thick] ([xshift=-\xh, yshift=-\yh+1*\dy] Ab) -- ([xshift=\xh, yshift=\yh+1*\dy] Ab);
            \draw [black, thick] ([xshift=-\xh, yshift=-\yh+2*\dy] Ab) -- ([xshift=\xh, yshift=\yh+2*\dy] Ab);
            \draw [black, thick] ([xshift=-\xh, yshift=-\yh+3*\dy] Ab) -- ([xshift=\xh, yshift=\yh+3*\dy] Ab);
            \draw [black, thick] ([xshift=-\xh, yshift=-\yh+4*\dy] Ab) -- ([xshift=\xh, yshift=\yh+4*\dy] Ab);
            \draw [black, thick] ([xshift=-\xh, yshift=-\yh+5*\dy] Ab) -- ([xshift=\xh, yshift=\yh+5*\dy] Ab);
            
            \draw[->,>=latex] (-2.0, -1.0) to ([xshift=-0.4em, yshift=-0.4em] O);
            \draw (-2.0, -1.0) node[black, left]{abnormal};
            
            \draw[->,>=latex] (4.5, 2.7) to (2.7,2.0);
            \draw (4.5, 2.7) node[black, right]{hyperbolic};
            
            \draw[->,>=latex] (-4.5, 2.7) to (-2.7,2.0);
            \draw (-4.5, 2.7) node[black, left]{elliptic};
            
            \shade[ball color=green] (Ab) circle (0.2);
            \shade[ball color=blue] (E) circle (0.2);
            \shade[ball color=red] (H) circle (0.2);
            
            \draw (-0.0,-0.2) node[below]{$t$};
            
            
            
\end{tikzpicture}
\vspace{-0.3cm}
\caption{Projection of the accessibility sets on the $q_3$-coordinate in the abnormal case.}
    \label{fig:accessibility_scl_3}
\end{figure}

Note that the model \eqref{eq:scl_forme_normal_abnormal} shows clearly that the abnormal curve is a limit curve, as observed by Carath\'eodory. In the n-dimensional case, conjugate points along the abnormal curve can be computed and correspond to points where the extremity mapping becomes open. But clearly from \eqref{eq:scl_abnormal_form} this cannot occur in the 3d-case.
In particular one has:

\begin{proposition}
Under our assumptions, in the abnormal (exceptional) case the reference geodesic is time minimizing and time maximizing, with respect to all trajectories contained in a conic neighborhood of the reference curve.
\end{proposition}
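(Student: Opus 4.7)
The plan is to read off the claim directly from the Bonnard-Kupka semi-normal form \eqref{eq:scl_forme_normal_abnormal} of the Goh-prolonged system \eqref{eq:scl_extend_system} and from the cost estimate \eqref{eq:scl_abnormal_form}.

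First, under (A0)--(A3) and the action of the feedback group, I would place the reference abnormal geodesic $\gamma$ in the semi-normal form \eqref{eq:scl_forme_normal_abnormal}, identifying $\gamma$ with $t\mapsto(t,0,0)$ on $[0,T]$ and with the accessory control $v\equiv 0$. The endpoint of $\gamma$ in the extended state space is then $(T,0,0)$, while the third coordinate of any response is produced by the pure quadrature
\[
q_3(t_f)=\tfrac{1}{2}\int_0^{t_f} a(q_1(t))\,q_2(t)^2\,\diff t
\]
dictated by the normal form (the correction $\varepsilon_2$ being of higher order and tangential to $\gamma$).

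Second, I would consider an arbitrary trajectory of \eqref{eq:scl_extend_system} contained in a conic neighbourhood of $\gamma$ and meeting the endpoint of $\gamma$ at some time $t_f$ close to $T$. The terminal conditions $q_1(t_f)=T$, $q_2(t_f)=0$ and $q_3(t_f)=0$ must all hold simultaneously. Linearising along $\gamma$ through the Jacobi variational equation (exactly as in the derivation of \eqref{eq:scl_abnormal_form}) yields
\[
q_3(t_f)=\alpha(T-t_f)^2+o\bigl((T-t_f)^3\bigr),\qquad \alpha>0.
\]
Imposing $q_3(t_f)=0$ together with $\alpha>0$ forces $t_f=T$ as soon as the neighbourhood is small enough. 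Thus no nearby trajectory can reach the endpoint of $\gamma$ in a time different from $T$, so that $\gamma$ is at once time-minimizing and time-maximizing against all admissible competitors in the conic neighbourhood. Geometrically, this matches Figure \ref{fig:accessibility_scl_3}: the projection of the accessibility set onto the $q_3$-axis is a cusped region with vertex at $(T,0)$, and the abnormal curve is the only trajectory hitting that vertex.

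The main obstacle is the last implication, namely the passage from the pointwise expansion \eqref{eq:scl_abnormal_form} to the conclusion that $o((T-t_f)^3)$ is genuinely dominated by $\alpha(T-t_f)^2$ along all nearby trajectories, not just at their endpoint. This is exactly why the statement restricts to a \emph{conic} neighbourhood (equivalently, $L^\infty$-smallness of the accessory control $v$ with fixed direction along $\gamma$): keeping the Goh direction pinned along $\gamma$ makes the strict generalized Legendre-Clebsch condition (A3) uniform, hence the positivity of $\alpha$ in the Jacobi equation remains structurally stable and controls the remainder. The rest of the argument is intrinsic to the semi-normal form \eqref{eq:scl_forme_normal_abnormal} and requires no further computation.
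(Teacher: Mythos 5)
Your proposal is correct and follows essentially the same route as the paper: reduce to the abnormal semi-normal form \eqref{eq:scl_forme_normal_abnormal}, express $q_3$ by the quadrature $\frac{1}{2}\int_0^{t_f} a(q_1)q_2^2\,\diff t$, and invoke the Jacobi-equation expansion \eqref{eq:scl_abnormal_form} with $\alpha>0$ to conclude that the endpoint of $\gamma$ can only be reached at time $T$ within the conic neighborhood, which yields simultaneous time minimality and maximality (this is exactly the content of Fig.~\ref{fig:accessibility_scl_3}).
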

%

\subsection{Small-time balls and spheres in the strong current case}

One consequence of our previous analysis is to recover the fan shape of the small-time balls in the strong current case, described in the historical example. This is the second contribution of this article and we proceed as follows.

\subsubsection{The tangent model}

For the illustration, we consider that $\norme{F_0(q_0)}_g>1$ with $F_0(q_0)$ taken horizontal and pointing in the right direction and the metric $g$ given by $F_1(q_0)=\frac{\partial}{\partial x}$, $F_2(q_0)=\frac{\partial}{\partial y}$ with $q = (x,y)$. 
The ball of directions at $q_0$ is defined by:
\[
F(q_0) = \enstq{F_0(q_0) + u}{\norme{u}\leq 1}.
\]
It is represented on Fig.~3 and its boundary is a translation of the unit circle. The two abnormal directions are associated to the heading angles denoted $\{-\alpha^a, \alpha^a \}$ and correspond to the \emph{tangents} of the translated unit circle from $q_0$. These heading angles split the translated unit circle in two, the right part corresponds to hyperbolic directions and the left part to elliptic directions.
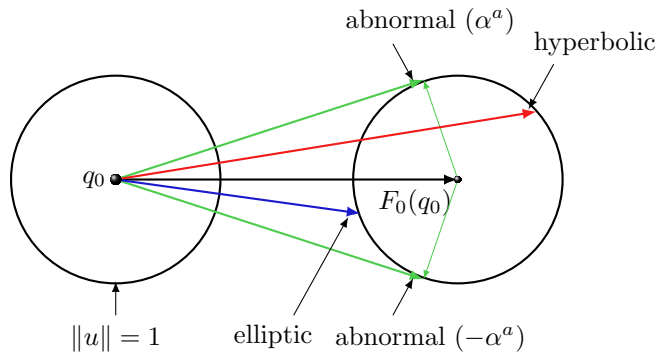
\begin{figure}[ht!]
\def\scaling{0.9}
\centering
\begin{tikzpicture}[scale=\scaling, rotate =-90]
\centering
    \pgfmathsetmacro{\Rinitial}{0.08}
    \pgfmathsetmacro{\Rorbite}{0.4}
    \pgfmathsetmacro{\R}{1.53}
    \pgfmathsetmacro{\Rpoint}{0.07} node[yshift=1];

    \coordinate (q0) at (0,-5.0);
    \draw[color=black,thick] (0,0) circle (\R);
    \draw[color=black,thick] (q0) circle (\R);
    
    \coordinate (x0) at ( 4,0.3);
    \coordinate (xf) at (-4,1);
    \coordinate (xi) at ( 2,0);
    \coordinate (xo) at (-1.9021,0.6180);

    \coordinate(A1) at ( 1.47,-0.5);
    \coordinate(A2) at (-1.47,-0.5);
    \coordinate(P1) at ( 1.0, 1.15);
    \coordinate(P2) at (-1.0, 1.15);
    \coordinate(P3) at ( 0.5,-1.42);

    \draw[color=black, ->,>=latex, thick] (q0) to (0,0);
    \draw (0.0,-1.3) node[black, below right]{$F_0(q_0)$};
    
    \draw[color=greenDrift, ->,>=latex,thick] (q0) to (A1);
    \draw[color=greenDrift, ->,>=latex, thick] (q0) to (A2);
    \draw[color=greenDrift, ->,>=latex, very thin] (0,0) to (A1);
    \draw[color=greenDrift, ->,>=latex, very thin] (0,0) to (A2);
    
    \draw[color=red, ->,>=latex, thick] (q0) to (P2);
    
    \draw[color=blue, ->,>=latex, thick] (q0) to (P3);

    
    \draw[->,>=latex,thin] (-2., -0.95) to (-1.45,-0.65);
    \draw (-2.,-0.4) node[black, above]{abnormal ($\alpha^a$)};

	\draw[->,>=latex,thin] (2., -0.95) to (1.45,-0.65);
    \draw (2.,-0.4) node[black, below]{abnormal ($-\alpha^a$)};
    
    \draw[->,>=latex,thin] (-1.7, 1.4) to (-1.03, 1.03);
    \draw (-1.7, 2.) node[black, above]{hyperbolic};
    
    \draw[->,>=latex,thin] (2., -2.4) to (0.55,-1.6);
    \draw (2., -2.7) node[black, below]{elliptic};
    
     \draw[->,>=latex,thin] (2.0,-5.0) to (1.5,-5.0);
     \draw (2.0,-5.0) node[black, below]{$\norme{u}=1$};
    
     \node[left] at (q0) {$q_0$};

    \shade[ball color=black] (q0) circle (\Rinitial);
    \shade[ball color=black] (0,0) circle (0.05);
\end{tikzpicture} 
\vspace{-0.25cm}
\caption{Strong current case: ball of directions.}
\label{fig:strong_drift_scl}
\end{figure}
%

\subsubsection{Small spheres and balls}

Using Section \ref{sec:scl_CZG}, one gets the following.

\begin{proposition}
In the strong current case for a small-time, the exponential mapping is a diffeomorphism from the unit circle onto its image, which is formed on one part by the extremities of the hyperbolic trajectories, the other part being the extremities of the elliptic trajectories, the two parts being separated by the two points corresponding to the abnormal trajectories. Hyperbolic and elliptic geodesics correspond respectively to time minimizing and time maximizing trajectories, while abnormal geodesics are time minimizing and time maximizing. 
\end{proposition}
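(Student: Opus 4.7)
The plan is to build on the tangent model of Figure~\ref{fig:strong_drift_scl} together with the semi-normal form analysis of Section~\ref{sec:scl_CZG}. To begin, I would linearize the exponential map at $t=0$. By Taylor expansion of the flow of \eqref{eq:scl_dynamics},
\[
\exp_{q_0,t}(\alpha_0) = q_0 + t\bigl(F_0(q_0) + \cos\alpha_0\, F_1(q_0) + \sin\alpha_0\, F_2(q_0)\bigr) + O(t^2),
\]
and since $\{F_1(q_0), F_2(q_0)\}$ is an orthonormal frame of $g$, the map $\alpha_0 \mapsto F_0(q_0) + \cos\alpha_0\, F_1(q_0) + \sin\alpha_0\, F_2(q_0)$ is a smooth diffeomorphism from $\MSphere^1$ onto the translated unit circle $\partial F(q_0)$. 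Because $\norme{F_0(q_0)}_g > 1$, this translated circle stays at positive distance from $q_0$, so applying a standard perturbation argument to the $C^1$-rescaled map $(\exp_{q_0,t}(\cdot)-q_0)/t$ yields that $\exp_{q_0,t}$ is an embedding of $\MSphere^1$ onto its image for all sufficiently small $t>0$.

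Next, I would identify the three regimes on $\MSphere^1$. The two heading angles $\pm\alpha^a$ for which the ray $q_0 + t\, X(\tilde q_0)$ is tangent to $\partial F(q_0)$ are precisely the values at which $X$ lies in the plane spanned by $Y$ and $\liebra{Y}{X}$; a direct computation in the Darboux-type coordinates with $F_1=\partial_x$, $F_2=\partial_y$ at $q_0$ confirms that $D''$ vanishes exactly at $\alpha_0 = \pm\alpha^a$ and has opposite signs on the two open arcs of $\MSphere^1$ determined by these angles. By Proposition~\ref{prop:scl_extremal_parameterization} these two arcs carry respectively the hyperbolic and the elliptic geodesics, while $\{-\alpha^a,\alpha^a\}$ carries the abnormal ones; this yields the claimed decomposition of $\exp_{q_0,t}(\MSphere^1)$.

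Finally I would apply the three optimality propositions of Sections 2.2.2 and~\ref{sec:optimality_abnormal}. For each hyperbolic (resp.\ elliptic) direction bounded away from $\pm\alpha^a$, assumption (A0) is satisfied at $q_0$ and the first conjugate time along the corresponding reference geodesic depends continuously on $\alpha_0$; restricting to a compact sub-arc then gives a uniform positive lower bound on this conjugate time, so the normal-case proposition yields simultaneous time-minimality (resp.\ time-maximality) in a conic neighborhood up to a common positive time. The abnormal proposition of Section~\ref{sec:optimality_abnormal} handles the two boundary directions and gives the simultaneous min-max property. The main obstacle I expect is upgrading the \emph{local} (conic-neighborhood) optimality statements to genuine small-time optimality against \emph{arbitrary} admissible trajectories of \eqref{eq:scl_dynamics}. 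I would handle this by a compactness argument: cover $\MSphere^1$ by finitely many conic neighborhoods on which the semi-normal forms \eqref{eq:scl_forme_normal} and \eqref{eq:scl_forme_normal_abnormal} apply, then use the embedding property established above together with the continuity of the flow to show that, for $t$ small enough, any competing admissible trajectory of length $t$ with the same endpoint must stay inside one of these conic neighborhoods, so the local conclusions transfer to the small-time sphere $\mathbf{S}(q_0,t)$. Combined with the diffeomorphism statement, this delivers the fan shape of the small-time ball.
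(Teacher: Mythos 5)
Your proposal is correct and follows essentially the same route as the paper, which proves this proposition simply by assembling the tangent model of Fig.~3, the first-order behaviour of the exponential mapping, and the three optimality propositions of Section~2.2 (normal and abnormal semi-normal forms); your reconstruction fills in exactly those steps. The globalization issue you flag (conic-neighborhood optimality versus arbitrary competitors) is likewise left implicit in the paper, so your compactness sketch is an adequate match for the intended argument.
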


The sphere and the ball with small radius are represented on Fig.~\ref{fig:sphere_strong_scl} and in particular this gives the fan shape of the corresponding balls. The contact of the hyperbolic sector with the abnormal curve can be obtained as in section \ref{sec:optimality_abnormal} using the micro-local model where the abnormal geodesic is normalized to the horizontal line. A more precised representation can be obtained in the rotational case, since the geodesic flow is Liouville integrable, which leads to the exact computation of the exponential mapping. This point of view will be developed in the next section.

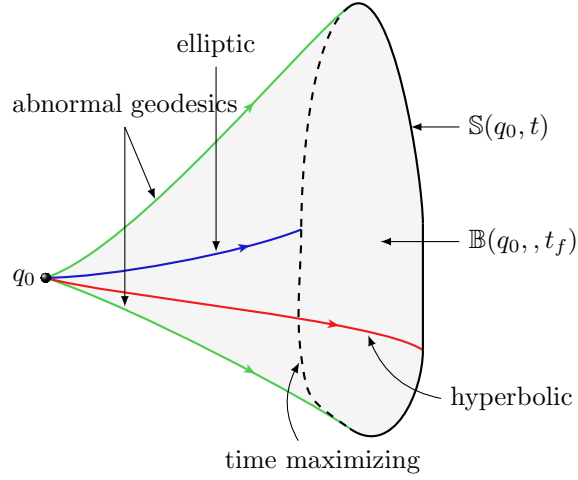
\begin{figure}[ht!]
\def\scaling{0.8}
\centering
    \begin{tikzpicture}[scale=\scaling, rotate=-90]
        \coordinate (q0) at (0,-4.0);

        \coordinate (x0) at ( 4,0.3);
        \coordinate (xf) at (-4,1);
        \coordinate (xi) at ( 2,0);
        \coordinate (xo) at (-1.9021,0.6180);

        \coordinate(A1) at (-4.5, 1.0); 
        \coordinate(A2) at ( 2.5, 1.0); 

        \coordinate(A11)  at ( -0.9, 2.2);
        \coordinate(A12)  at (  1.2, 2.2);

        \coordinate(P1) at (-0.8, 0.2);
        \coordinate(P2) at ( 1.3, 0.2);

        \def\pxa{0.7} 
        \def\pya{0.0} 
        \def\pxb{0.7} 
        \def\pyb{0.0} 
        \filldraw[draw=none,fill=gray!20,opacity=0.4] 
        (q0) .. controls +(-0.5,1.5) and +( 0.5,-0.71) .. (A1)
        .. controls +(-0.5, 0.71)    and +(-\pxa, \pya) .. (A11)
        .. controls +( \pxa, -\pya)  and +(-\pxb, \pyb) .. (A12)
        .. controls +( \pxb, -\pyb)  and +(  0.5, 0.71) .. (A2)
        .. controls +(-0.5,-0.71) and +( 0.5,1.5) .. (q0)
        ;

        \draw [thick, greenDrift] (q0) .. controls +(-0.5,1.5) and +( 0.5,-0.71) .. (A1)
        node[pos=0.6, sloped, rotate=90] {\tiny \ding{228}}; 

        \draw [thick, greenDrift] (q0) .. controls +( 0.5,1.5) and +(-0.5,-0.71) .. (A2)
            node[pos=0.6, sloped, rotate=-90] {\tiny \ding{228}}; 

        \draw [thick, red] (q0) .. controls +(0.3,1.0) and +(-0.5,-0.71) .. (A12)
        node[pos=0.7, sloped, rotate=-90] {\tiny \ding{228}};

        \draw [thick, blue ] (q0) .. controls +(0.0,1.0) and +( 0.4,-1.0) .. (P1)
        node[pos=0.75, sloped, rotate=90] {\tiny \ding{228}};

        \draw [thick, black] (A1)  .. controls +(-0.5, 0.71)    and +(-\pxa, \pya) .. (A11);
        \draw [thick, black] (A11) .. controls +(\pxa, -\pya)  and +(-\pxb, \pyb) .. (A12);
        \draw [thick, black] (A12) .. controls +(\pxb, -\pyb)  and +(0.5, 0.71) .. (A2);

        \def\pxa{0.7}
        \def\pya{0.0}
        \def\pxb{0.7}
        \def\pyb{-0.1}
        \draw [thick, black, dashed] (A1) .. controls +( 0.5, -0.71)   and +(-\pxa, \pya) .. (P1);
        \draw [thick, black, dashed] (P1) .. controls +( \pxa, -\pya)  and +(-\pxb, \pyb) .. (P2);
        \draw [thick, black, dashed] (P2) .. controls +( \pxb, -\pyb)  and +(-0.5, -0.71) .. (A2);

    \draw[->,>=latex] (-2.5, -2.7) to (0.48, -2.7);
    \draw[->,>=latex] (-2.5, -2.7) to (-1.3, -2.2);
    \draw (-2.5, -2.7) node[black, above]{abnormal geodesics};

    \draw[->,>=latex, bend left] (2., 2.5) to (0.95, 1.3);
    \draw (2., 2.5) node[black, right]{hyperbolic};

    \draw[->,>=latex] (-3.5,-1.2) to (-0.43,-1.2);
    \draw (-3.5,-1.2) node[black, above]{elliptic};

    \draw[->,>=latex] (-2.5, 2.8) to (-2.5, 2.);
    \draw (-2.5, 2.8) node[black, right]{$\MSphere(q_0,t)$};
    
    \draw[->,>=latex] (-0.6, 2.8) to (-0.6, 1.5);
    \draw (-0.6, 2.8) node[black, right]{$\MBall(q_0,,t_f)$};    

    \draw[->,>=latex, bend left] (2.7, 0.15) to (1.4, 0.17);
    \draw (2.7, 0.55) node[black, below]{time maximizing};
    
    \shade[ball color=black] (q0) circle (0.08);
    \node[left] at (q0) {$q_0$};
    
\end{tikzpicture}
\vspace{-1.em}
\caption{Small sphere (plain black line) and ball (delimited by the abnormal geodesics and the sphere) for the strong current case. Note that the elliptic geodesics do not play a role in the construction of the time minimal sphere. But the part of the wavefront formed by the elliptic geodesics is represented (dashed line), together with an elliptic geodesic, to emphasize the difference with the weak case and represent the accessibility set in time $t_f$ which is the interior of the wavefront.}
\label{fig:sphere_strong_scl}
\end{figure}
%

\section{The cusp singularity in the abnormal direction and regularity of the time minimal value function}
\label{sec:scl_cusp}

The main contribution of this article is to analyze, based on the historical example from Carath\'eodory and Zermelo, the deformation (morphogenesis) of the boundary of the accessibility set when meeting the transition between the domain of strong current and the domain of weak current. This causes a cusp singularity of the abnormal extremal which is in particular at such point not an immersed curve\footnote{This is not covered by the article \cite{BonnardKupka:1993}.}. We refer to \cite{BCW2019, Caratheodory} to the occurrence of the cusp singularity in both examples, which motivates the study of this \emph{stable} singularity. This phenomenon is analyzed in this article in relation with the non-continuity of the value function in the strong current Zermelo case and the appearance of a new branch in the cut locus not present in the Finsler case. In the following, the problem is set as a family of problems with rotational symmetry, which covers the two case studies. Note that the historical example is taken as a semi-normal form in the case of revolution, and we refer to \cite{BRW:2021} for the analysis in the general case.

\subsection{The geometric frame and integrability properties}

Recall that in Dardoux coordinates $(r,\theta)$, we consider a metric of the form $g = \diff r^2 + m^2(r) \diff\theta^2$ and a current $F_0(q) = \mu(r)\frac{\partial}{\partial \theta}$. With such a metric, $F_1 = \frac{\partial}{\partial r}$ and $F_2 = \frac{1}{m(r)}\frac{\partial}{\partial \theta}$
form an orthonormal frame.
Using the Carath\'eodory-Zermelo-Goh extension one gets with $\tilde{q} = (r,\theta, \alpha)$ ($\alpha$ being the heading angle):
\[
X = \cos\alpha\frac{\partial}{\partial r} + \left(\mu(r) + \frac{\sin\alpha}{m(r)}\right) \frac{\partial}{\partial \theta}.
\]
Straightforward computations give: 
\[
\begin{aligned}
\liebra{Y}{X}(\tilde{q}) \hspace{1.8em} &= \sin{\alpha}\frac{\partial}{\partial r} - \frac{\cos{\alpha}}{m(r)}\frac{\partial}{\partial \theta}, \\ 
\liebra{\liebra{Y}{X}}{Y}(\tilde{q}) &= \cos{\alpha}\frac{\partial}{\partial r} + \frac{\sin{\alpha}}{m(r)}\frac{\partial}{\partial \theta}, \\ 
\liebra{\liebra{Y}{X}}{X}(\tilde{q}) &= \left(-\mu'(r)\sin\alpha +\frac{m'(r)}{m^2(r)}\right) \frac{\partial}{\partial \theta}.
\end{aligned}
\]
Hence, we have:
\[
\begin{aligned}
D(\tilde{q})   =  \frac{1}{m(r)}, ~
D'(\tilde{q})  = -\mu'(r)\sin^2\alpha + \frac{m'(r)\sin\alpha}{m^2(r)}, ~
D''(\tilde{q}) =  \mu(r)\sin{\alpha} + \frac{1}{m(r)}.
\end{aligned}
\]
So that conditions $(A2)$ and $(A3)$ are satisfied, in particular every geodesic is strict.
But the collinearity condition $(A1)$ can be violated at points where 
\begin{equation*}
    \cos\alpha = \mu(r) + \frac{\sin\alpha}{m(r)} = 0.
\end{equation*}
The dynamics is given by
\begin{equation}
\begin{aligned}
    \dot{r}      &= \cos{\alpha}, \\
    \dot{\theta} &= \mu(r) + \frac{\sin\alpha}{m(r)}, \\
    \dot{\alpha} &= \mu'(r)m(r)\sin^2\alpha - \frac{m'(r)\sin\alpha}{m(r)}.
\end{aligned}
 \label{eq:scl_dynamics_in_dimension_3}
\end{equation}
The following is useful.

\begin{proposition}
The dynamics \eqref{eq:scl_dynamics_in_dimension_3} can be integrated by quadrature.
\end{proposition}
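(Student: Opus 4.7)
The plan is to exploit the fact that $\theta$ does not appear on the right-hand side of \eqref{eq:scl_dynamics_in_dimension_3}, so that $(r,\alpha)$ forms an autonomous planar subsystem and the equation for $\theta$ decouples. Once $r(\cdot)$ and $\alpha(\cdot)$ are determined, the remaining $\theta$-evolution is obtained directly by the quadrature
\[
\theta(t) - \theta(0) = \int_0^t \left( \mu(r(s)) + \frac{\sin \alpha(s)}{m(r(s))} \right) \diff s.
\]
Hence the whole problem reduces to integrating the planar $(r,\alpha)$ subsystem.

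The key step is to eliminate time between $\dot r = \cos\alpha$ and the $\alpha$-equation, getting
\[
\frac{\diff \alpha}{\diff r} = \mu'(r)\, m(r)\,\frac{\sin^2\alpha}{\cos\alpha} - \frac{m'(r)}{m(r)}\,\frac{\sin\alpha}{\cos\alpha}.
\]
Setting $w = \sin\alpha$ so that $\diff w = \cos\alpha\, \diff \alpha$, one recognizes a Bernoulli equation
\[
\frac{\diff w}{\diff r} = \mu'(r)\, m(r)\, w^2 - \frac{m'(r)}{m(r)}\, w,
\]
which is linearized by the standard substitution $z = 1/w$. With integrating factor $1/m(r)$ the resulting linear ODE $\frac{\diff z}{\diff r} - \frac{m'(r)}{m(r)}\, z = -\mu'(r)\, m(r)$ yields $z(r) = m(r)\,(C - \mu(r))$ for an arbitrary constant $C$, whence
\[
\sin\alpha = \frac{1}{m(r)\,(C - \mu(r))}.
\]
This is the desired first integral of the $(r,\alpha)$ subsystem; it can also be read off the Hamiltonian formulation as a Clairaut-type relation, since $\theta$ being cyclic in $\Htrue = p_\theta\,\mu(r) + \sqrt{p_r^2 + p_\theta^2/m^2(r)} + p^0$ yields conservation of $p_\theta$, and the expression \eqref{eq:scl_optimal_control} of the control in terms of $p_r,p_\theta$ transforms this conservation into precisely the above relation (with $C$ determined by $p^0/p_\theta$).

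With $\sin\alpha$ known as a function of $r$, set $\cos\alpha(r) = \pm\sqrt{1 - [m(r)(C-\mu(r))]^{-2}}$ and use $\dot r = \cos\alpha$ to obtain
\[
t - t_0 = \int_{r_0}^{r} \frac{\diff s}{\cos\alpha(s)},
\]
which determines $r(t)$ by inversion of a single quadrature. Substituting back into the formula for $\theta$ completes the integration. The only delicate point, which I do not expect to be a real obstacle, is the behaviour at the turning points $\cos\alpha = 0$ of $r$: the integrand has there an integrable singularity of the standard Clairaut type, handled exactly as in the classical case of a Riemannian metric of revolution, and this does not affect the reduction to quadratures.
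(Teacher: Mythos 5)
Your proof is correct and reaches the same key identity as the paper, namely the first integral $\sin\alpha = 1/\bigl(m(r)(C-\mu(r))\bigr)$, which is exactly the paper's implicit relation $p_\theta\bigl(\mu(r) + \frac{1}{m(r)\sin\alpha}\bigr) + p^0 = 0$ with $C = -p^0/p_\theta$. The route is genuinely different, though: the paper works on the cotangent side, deriving the Clairaut relation $p_r\sin\alpha = \frac{p_\theta}{m(r)}\cos\alpha$ from the maximization condition $\partial H/\partial\alpha = 0$ and then substituting into $H=0$, whereas you eliminate time in the $(r,\alpha)$ subsystem and solve the resulting Bernoulli equation by the substitution $w=\sin\alpha$, $z=1/w$. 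Your approach is more elementary and self-contained (no appeal to the adjoint variables), and you correctly identify the Hamiltonian interpretation as a cross-check; the paper's approach buys the geometric meaning of the constant ($p_\theta$ as the Clairaut constant, fixed by the normalization $\lambda(0)=1$), which is what the rest of Section 3 uses to stratify the Lagrangian manifold. One small point you gloss over: the substitution $z=1/w$ degenerates on the invariant set $\sin\alpha = 0$ (not only at the turning points $\cos\alpha=0$ that you do discuss), and the paper treats this case separately, obtaining $\alpha(t)=\alpha_0$, $r(t)=\pm t + r_0$, $\theta(t)=\int_0^t \mu(r)\,\mathrm{d}s$; it is a one-line addition, but it should be stated since these meridian-type geodesics are not captured by your closed-form first integral for any finite $C$.
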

\begin{proof}
The pseudo-Hamiltonian takes the form:
\begin{equation}
\label{eq:scl_pseudo_Hamiltonian}
H = p_r\cos\alpha + p_\theta \left(\mu(r) + \frac{\sin\alpha}{m(r)}\right) + p^0.
\end{equation}
Moreover, from the maximization condition one has 
$\frac{\partial H}{\partial \alpha} = 0,$
which gives the \emph{Clairaut relation}
%
$p_r\sin\alpha =  \frac{p_\theta}{m(r)}\cos\alpha.$
%
So, $(p_r,p_\theta/m(r))$ and $(\cos\alpha,\sin\alpha)$ are collinear and one has $(p_r,p_\theta/m(r)) = \lambda (\cos\alpha,\sin\alpha)$ with $\lambda = (p_r^2 + \frac{p_\theta^2}{m^2(r)})^{1/2}$.
Plugging such $p_r$ into \eqref{eq:scl_pseudo_Hamiltonian} allows us to define the following implicit relation between $\alpha$ and $r$:
\begin{equation}
\label{eq:scl_r_implicit_r}
    p_\theta \left(\mu(r) + \frac{1}{m(r)\sin{\alpha}} \right) + p^0 = 0,
\end{equation}
for $\alpha \neq  0~[\pi]$.
In the case where $\alpha_0 = 0~[\pi]$, one has:
\[
\alpha(t) = \alpha_0, ~~ r(t) = \pm t + r_0, ~~ \text{and} ~~ \theta(t) = \int_0^t \mu(r) \diff t. 
\]

Besides, by homogeneity one can fix $\lambda(0)=1$ and so $(p_{r_0},p_\theta/m(r_0)) = (\cos\alpha_0,\sin\alpha_0)$. Then, one gets $p_\theta = m(r_0)\sin\alpha_0$ and from \eqref{eq:scl_r_implicit_r} one deduces $p^0 = -1 - p_\theta \mu(r_0)$.
Equation \eqref{eq:scl_dynamics_in_dimension_3} can now be solved by quadrature and
from geometric control point of view, it amounts to compute first the control using the integration of the heading angle, $r$ being given by equation \eqref{eq:scl_r_implicit_r}. Then, $\theta$ can be obtained using a further quadrature.
\end{proof}
This result can be applied to our two case studies, to give a model of the cusp singularity.
We shall present it in our frame, with the historical model.

\subsection{Computations in the historical example}

We consider now the historical example presented in the introduction. 
We consider the following coordinates $\tilde{q} = (x,y,\gamma) = (\theta, r, \pi/2-\alpha)$, where $r$, $\theta$ and $\alpha$ are understood in the sense of the previous section. In this historical example the functions $\mu(\cdot)$ and $m(\cdot)$ are given by $\mu(y) = y$ and $m(y) = 1$. In this representation, the dynamics takes the following form:
\begin{equation}
    \dot{x} = y + \cos{\gamma},     \quad
    \dot{y} = \sin{\gamma}, \quad
    \dot{\gamma} =-\cos^2\gamma.
    \label{eq:scl_historical_example}
\end{equation}

Straightforward computations using the previous section leads to
\[
D(\tilde{q})   = 1,~~ 
D'(\tilde{q})  = \cos^2\gamma ~~\text{and}~~
D''(\tilde{q}) = y\cos{\gamma} + 1,
\]
and thanks to Proposition~\ref{prop:scl_extremal_parameterization} we can parameterize abnormal, hyperbolic and elliptic extremals.
\begin{itemize}
\item \textbf{Abnormal case.} The abnormal geodesics are contained in $D'' = y\cos{\gamma} + 1 = 0$. Hence, given an initial condition $(x_0,y_0,\gamma_0)$ such that $|y_0| \geq 1$, the associated geodesic is abnormal if $\gamma_0 \in \{\gamma_a^1,\gamma_a^2\}$ with 
\[
\gamma_a^1 = \arccos\left({-\frac{1}{y_0}}\right) \quad \text{and} \quad \gamma_a^2 = -\arccos\left({-\frac{1}{y_0}}\right).
\]
If the current is strong, that is if $|y_0| >1$, then $\gamma_a^1 \neq \gamma_a^2$ and we have two abnormal geodesics. Else, if $|y_0| =1$ there is only one abnormal, and if $|y_0| < 1$ (this corresponds to a weak current) there is no abnormal geodesics.
\item \textbf{Normal case.} The hyperbolic (resp. elliptic) geodesics are contained in $DD'' = D'' >0$ (resp. $DD'' = D'' <0$). Hence, given $(x_0,y_0,\gamma_0)$:
\begin{itemize}
    \item if $|y_0| <1$, then $y_0\cos\gamma_0 + 1 > 0$ and thus the geodesic is hyperbolic.
    \item for $|y_0| = 1$, if the geodesic is normal, then it is hyperbolic.
    \item for $|y_0| > 1$, if the geodesic is normal, then it is either hyperbolic or elliptic depending on the sign of $y_0\cos\gamma_0 + 1$. Note that the hyperbolic and elliptic geodesics are separated by the abnormal geodesics as illustrated in Fig.~\ref{fig:strong_drift_scl}.
\end{itemize}
\end{itemize}

To complete the discussion about the historical example, we give the integration of the system.

\begin{proposition}
Let $(x_0,y_0,\gamma_0)$ be the initial condition, the corresponding solution $(x(t),y(t),\gamma(t))$ is given as follows.

\begin{itemize}
\item For  $\gamma_0 = \pm\pi/2$ one has: 
\[
\gamma(t) = \gamma_0, ~~ y(t) = \pm t + y_0 ~~ \text{and} ~~ x(t) = \pm \frac{t^2}{2} + y_0 t + x_0.
\]
\item For  $\gamma_0 \in (-\pi/2,\pi/2)$, one has:
\begin{equation*}
\begin{aligned}
\gamma(t) &= \atan{(\tan\gamma_0-t)}, ~~~~
y(t)      = y_0 + \frac{1}{\cos\gamma_0} - \frac{1}{\cos{\gamma(t)}}, \\ 
x(t) &=  \frac{1}{2} 
\left[ \ln\left|\frac{\cos{\gamma}}{1+\sin{\gamma}}\right| \right]_{\gamma_0}^{\gamma(t)} +
\frac{1}{2} \left[ \frac{\tan{\gamma}}{\cos{\gamma}} \right]_{\gamma_0}^{\gamma(t)} 
+ \left(y_0 + \frac{1}{\cos\gamma_0} \right) t + x_0.
\end{aligned}
\end{equation*}
\item For  $\gamma_0 \in (-\pi,-\pi/2) \,\cup \,(\pi/2,\pi]$, one has:
\begin{equation*}
\begin{aligned}
\gamma(t) &= \pi + \atan{(\tan\gamma_0-t)}, ~~~~
y(t)      = y_0 + \frac{1}{\cos\gamma_0} - \frac{1}{\cos{\gamma(t)}}, \\ 
x(t) &=  \frac{1}{2} 
\left[ \ln\left|\frac{\cos{\gamma}}{1+\sin{\gamma}}\right| \right]_{\gamma_0}^{\gamma(t)} +
\frac{1}{2} \left[\frac{\tan{\gamma}}{\cos{\gamma}} \right]_{\gamma_0}^{\gamma(t)} 
+ \left(y_0 + \frac{1}{\cos\gamma_0} \right) t + x_0.
\end{aligned}
\end{equation*} 
\end{itemize}
\end{proposition}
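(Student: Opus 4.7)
The plan is to integrate the system \eqref{eq:scl_historical_example} in cascade, exploiting the fact that $\dot\gamma = -\cos^{2}\gamma$ is autonomous. I would first integrate $\gamma(\cdot)$, then use it to integrate $y(\cdot)$, and finally use both to integrate $x(\cdot)$. The natural case split comes from the equilibria $\gamma = \pm\pi/2$ of the $\gamma$-equation and from the fact that $\arctan$ has a principal branch in $(-\pi/2,\pi/2)$: this is exactly the trichotomy stated in the proposition.

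For $\gamma_0 = \pm\pi/2$, note that $\pm\pi/2$ are fixed points of $\dot\gamma = -\cos^{2}\gamma$, so $\gamma(t)\equiv\gamma_0$ and $\sin\gamma_0 = \pm 1$, $\cos\gamma_0 = 0$. Then $\dot y = \pm 1$ gives $y(t) = \pm t + y_0$, and $\dot x = y$ integrates to $x(t) = \pm t^{2}/2 + y_0 t + x_0$. For the generic case $\gamma_0 \in (-\pi/2,\pi/2)$, I would separate variables in $\dot\gamma = -\cos^{2}\gamma$ as $\mathrm{d}(\tan\gamma) = -\mathrm{d}t$, obtaining $\tan\gamma(t) = \tan\gamma_0 - t$, and then take the $\arctan$ branch so that $\gamma(t)\in(-\pi/2,\pi/2)$, which gives $\gamma(t) = \atan(\tan\gamma_0 - t)$. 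For $\gamma_0$ in the opposite half-circle $(-\pi,-\pi/2)\cup(\pi/2,\pi]$, the same separation gives the same $\tan$ relation but now $\gamma$ must live in the other branch, whence the $\pi$ shift $\gamma(t)=\pi + \atan(\tan\gamma_0-t)$. The key point for consistency is that $\gamma(t)$ stays in the open interval containing $\gamma_0$ for all $t$, since the $\mathrm{d}(\tan\gamma)$ flow is monotone and misses $\pm\pi/2$.

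To integrate $y$, I would switch to $\gamma$ as the independent variable using $\mathrm{d}t = -\mathrm{d}\gamma/\cos^{2}\gamma$, so that
\[
\mathrm{d}y \;=\; \sin\gamma\,\mathrm{d}t \;=\; -\frac{\sin\gamma}{\cos^{2}\gamma}\,\mathrm{d}\gamma \;=\; -\mathrm{d}\!\left(\frac{1}{\cos\gamma}\right),
\]
which immediately yields $y(t) = y_0 + 1/\cos\gamma_0 - 1/\cos\gamma(t)$, valid in both normal cases since $\gamma(t)$ avoids $\pm\pi/2$.

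The main (only real) obstacle is the $x$-equation $\dot x = y + \cos\gamma$. I would decompose $x(t)-x_0 = \int_0^t y(s)\,\mathrm{d}s + \int_0^t \cos\gamma(s)\,\mathrm{d}s$; plugging in the expression for $y$, the first term becomes $(y_0 + 1/\cos\gamma_0)\,t - \int_0^t \mathrm{d}s/\cos\gamma(s)$, so that
\[
x(t)-x_0 \;=\; \bigl(y_0 + 1/\cos\gamma_0\bigr)\,t \;+\; \int_0^t \cos\gamma\,\mathrm{d}s \;-\; \int_0^t \frac{\mathrm{d}s}{\cos\gamma}.
\]
Changing variable via $\mathrm{d}s = -\mathrm{d}\gamma/\cos^{2}\gamma$, the two quadratures reduce to $\int \sec\gamma\,\mathrm{d}\gamma = \ln|(1+\sin\gamma)/\cos\gamma|$ and $\int \sec^{3}\gamma\,\mathrm{d}\gamma = \frac{1}{2}\bigl(\tan\gamma/\cos\gamma + \ln|(1+\sin\gamma)/\cos\gamma|\bigr)$. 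Combining the two with the appropriate signs, the logarithmic pieces partially cancel and one is left with
\[
\tfrac{1}{2}\bigl[\ln|\cos\gamma/(1+\sin\gamma)|\bigr]_{\gamma_0}^{\gamma(t)} + \tfrac{1}{2}\bigl[\tan\gamma/\cos\gamma\bigr]_{\gamma_0}^{\gamma(t)},
\]
which matches the claimed formula. The identical formula works verbatim for the third case because the antiderivatives $\sec\gamma$ and $\sec^{3}\gamma$ are valid on each connected component where $\cos\gamma$ does not vanish, and the $\pi$-shift in $\gamma(t)$ does not alter $\cos\gamma(t)$, $\sin\gamma(t)$, or $\tan\gamma(t)$. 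The only bookkeeping care needed is to ensure that $\gamma(t)$ does not cross $\pm\pi/2$ along the integration, which is guaranteed by the strict monotonicity of $t\mapsto \tan\gamma_0 - t$ and the choice of branch above. A final direct substitution of the candidate $(x,y,\gamma)$ into \eqref{eq:scl_historical_example} provides an a posteriori verification.
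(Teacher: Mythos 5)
Your proposal is correct and follows the same route the paper intends (the paper states this proposition without proof, as the direct instance of the preceding integration-by-quadrature result): integrate the autonomous equation $\dot\gamma=-\cos^2\gamma$ via $\tan\gamma(t)=\tan\gamma_0-t$, then reduce the $y$- and $x$-quadratures to $\gamma$ as independent variable using $\diff t=-\diff\gamma/\cos^2\gamma$ and the standard antiderivatives of $\sec\gamma$ and $\sec^3\gamma$; your relation $y=y_0+1/\cos\gamma_0-1/\cos\gamma$ is exactly the Clairaut first integral \eqref{eq:scl_r_implicit_r} specialized to $\mu=y$, $m=1$. One side remark is wrong and should be deleted: the $\pi$-shift \emph{does} change the signs of $\cos\gamma(t)$ and $\sin\gamma(t)$ (only $\tan$ is $\pi$-periodic); the third case is nevertheless fine because, as you also say, all formulas are expressed in terms of $\gamma(t)$ itself and the antiderivatives are valid on the connected component $(\pi/2,3\pi/2)$ where $\cos\gamma<0$, which the trajectory never leaves.
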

\medskip
\paragraph{\textbf{Cusp singularity and regularity of the value function}}
The integration of the system allows us to compute the
cusp points in the abnormal directions.
A cusp point denoted $(x_\mathrm{cusp},y_\mathrm{cusp},\gamma_\mathrm{cusp})$ occurs along an abnormal geodesic at time $t_\mathrm{cusp}$ when 
$\dot{x}(t_\mathrm{cusp}) = \dot{y}(t_\mathrm{cusp}) = 0$.
This gives      
\[
t_\mathrm{cusp} = \tan\gamma_0, ~~ 
\gamma_\mathrm{cusp} = 0~[\pi] ~~\text{and} ~~
y_\mathrm{cusp} = \sign  \left(y_0\right).
\]
Finally, $x_\mathrm{cusp}$ is deduced from the analytical expressions given above.
The geometric features of the model are the following.
\begin{itemize}
\item The abnormal with the cusp singularity is the limit curve of the micro-local sector, formed by self-intersecting hyperbolic geodesics, see Fig.~\ref{fig:flot_geodesics_scl}.
\item The abnormal is optimal up to the cusp point. Hence, it corresponds to a concept of conjugate point along the \emph{non-smooth} abnormal geodesic image, the geodesic being not an immersed curve.
\item Carath\'eodory \cite{Caratheodory} already described the following phenomenon. Due to the loss of local accessibility associated to the limit geodesic, the time minimal value function is \emph{not continuous}. This is clear from Fig.~\ref{fig:cusp_continuity_value_function_scl}. To reach from the initial point $q_0$ to a point $B$ at right of the limit curve, one must use a self-intersecting normal geodesic so that at the intersection with the abnormal geodesic, the time is longer along the normal than along the abnormal geodesic. We also observe that in this sector, the normal geodesic is optimal up to the intersection point with the abnormal geodesic. See \cite{BRW:2021} for the proof of this result in the general case.
\end{itemize}
\def\sizeFig{0.4}
\begin{figure}[ht!]
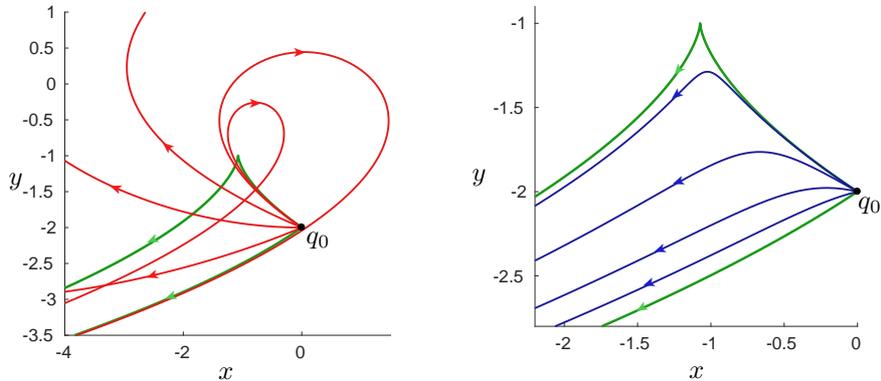

    \centering
    \def\x{493}
    \def\y{470}
    \begin{tikzgraphics}{\sizeFig\textwidth}{\x}{\y}{Flot_hyperboliques}
        \pxcoordinate{0.55*\x}{1.05*\y}{A}; \draw (A) node {$x$};
        \pxcoordinate{-0.02*\x}{0.5*\y}{A}; \draw (A) node {$y$};
        
        \pxcoordinate{0.8*\x}{0.67*\y}{A}; \draw (A) node {$q_0$};
        \pxcoordinate{0.35*\x}{0.67*\y}{A}; \draw (A) node[greenDrift, sloped, rotate=205] {\tiny \ding{228}};
        \pxcoordinate{0.4*\x}{0.83*\y}{A}; \draw (A) node[greenDrift, sloped, rotate=200] {\tiny \ding{228}};
        \pxcoordinate{0.35*\x}{0.77*\y}{A}; \draw (A) node[red, sloped, rotate=200] {\tiny \ding{228}};
        \pxcoordinate{0.25*\x}{0.52*\y}{A}; \draw (A) node[red, sloped, rotate=160] {\tiny \ding{228}};
        \pxcoordinate{0.39*\x}{0.4*\y}{A}; \draw (A) node[red, sloped, rotate=140] {\tiny \ding{228}};
        \pxcoordinate{0.75*\x}{0.13*\y}{A}; \draw (A) node[red, sloped, rotate= 0] {\tiny \ding{228}};
        \pxcoordinate{0.63*\x}{0.275*\y}{A}; \draw (A) node[red, sloped, rotate= 0] {\tiny \ding{228}};
        
    \end{tikzgraphics}
    \hspace{1em}
    \def\sizeFig{0.4}
    \def\x{484}
    \def\y{471}
    \begin{tikzgraphics}{\sizeFig\textwidth}{\x}{\y}{Flot_elliptiques}
        \pxcoordinate{0.55*\x}{1.05*\y}{A}; \draw (A) node {$x$};
        \pxcoordinate{-0.04*\x}{0.5*\y}{A}; \draw (A) node {$y$};
        \pxcoordinate{1.02*\x}{0.58*\y}{A}; \draw (A) node {$q_0$};
        
        \pxcoordinate{0.5*\x}{0.2*\y}{A}; \draw (A) node[greenDrift, sloped, rotate=230] {\tiny \ding{228}};
        \pxcoordinate{0.4*\x}{0.87*\y}{A}; \draw (A) node[greenDrift, sloped, rotate=210] {\tiny \ding{228}};
        \pxcoordinate{0.42*\x}{0.8*\y}{A}; \draw (A) node[blue, sloped, rotate=200] {\tiny \ding{228}};
        \pxcoordinate{0.45*\x}{0.705*\y}{A}; \draw (A) node[blue, sloped, rotate=200] {\tiny \ding{228}};
        \pxcoordinate{0.5*\x}{0.515*\y}{A}; \draw (A) node[blue, sloped, rotate= 200] {\tiny \ding{228}};
        \pxcoordinate{0.495*\x}{0.27*\y}{A}; \draw (A) node[blue, sloped, rotate= 220] {\tiny \ding{228}};
\end{tikzgraphics}
\vspace{-0.5cm}
\caption{(Left) Hyperbolic geodesics (in red) that started from the initial point $q_0 = (-2,0)$ portrayed in black, in the whole conic neighborhood delimited by the two abnormal geodesics (in green). (Right) Elliptic geodesics (in blue) from the same initial point and with the same sector. The set $\norme{F_0(q)}_g=1$ is given by $y=\pm 1$.}
\label{fig:flot_geodesics_scl}
\end{figure}

\def\sizeFig{0.38}
\begin{figure}[ht!]
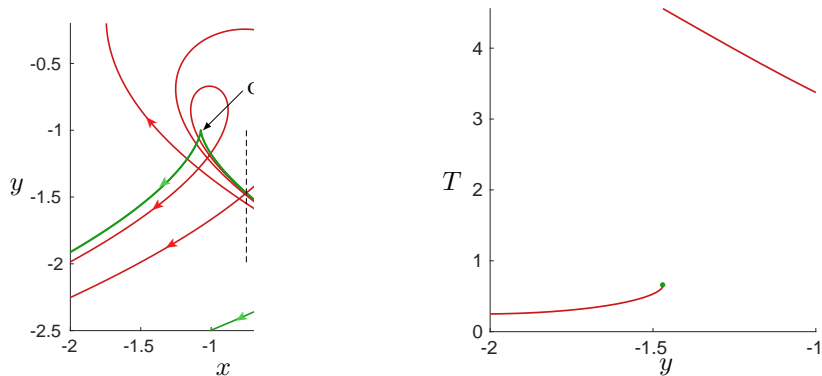

\centering
\def\x{493}
\def\y{470}
\begin{tikzgraphics}{\sizeFig\textwidth}{\x}{\y}{fig_value_function1}
    \pxcoordinate{0.55*\x}{1.05*\y}{A}; \draw (A) node {$x$};
    \pxcoordinate{-0.04*\x}{0.5*\y}{A}; \draw (A) node {$y$};
    
    \pxcoordinate{0.38*\x}{0.49*\y}{A}; \draw (A) node[greenDrift, sloped, rotate=220] {\tiny \ding{228}};
    \pxcoordinate{0.6*\x}{0.895*\y}{A}; \draw (A) node[greenDrift, sloped, rotate=210] {\tiny \ding{228}};
    \pxcoordinate{0.4*\x}{0.675*\y}{A}; \draw (A) node[red, sloped, rotate=210] {\tiny \ding{228}};
    \pxcoordinate{0.34*\x}{0.3*\y}{A}; \draw (A) node[red, sloped, rotate=130] {\tiny \ding{228}};
    \pxcoordinate{0.36*\x}{0.558*\y}{A}; \draw (A) node[red, sloped, rotate=220] {\tiny \ding{228}};
    
    \pxcoordinate{0.97*\x}{0.76*\y}{A}; \draw (A) node {$q_0$};
    \pxcoordinate{0.704*\x}{0.43*\y}{A}; \draw (A) node {{\tiny \textbullet}};
    \pxcoordinate{0.74*\x}{0.46*\y}{A}; \draw (A) node {$B$};
    
    \pxcoordinate{0.49*\x}{0.33*\y}{A};
    \draw[->, >=latex, very thin] ([xshift=2.cm, yshift= 2.0cm] A) to (A);
    \draw ([xshift=3.5cm, yshift= 2.0cm] A) node {cusp};
\end{tikzgraphics}
\hspace{0.25em}
\def\x{484}
\def\y{471}
\begin{tikzgraphics}{\sizeFig\textwidth}{\x}{\y}{fig_value_function2}
    \pxcoordinate{0.55*\x}{1.05*\y}{A}; \draw (A) node {$y$};
    \pxcoordinate{-0.06*\x}{0.5*\y}{A}; \draw (A) node {$T$};
\end{tikzgraphics}
\vspace{-0.5cm}
\caption{(Left) The initial point is $q_0 = (-2,0)$. The abnormal geodesic with the cusp singularity is in green while the others geodesics in red are hyperbolic. We can see that the cusp singularity is the limit of self-intersecting hyperbolic geodesics. Besides, to reach the point $B$ from $q_0$, one has to use a hyperbolic self-intersecting geodesic. When this hyperbolic geodesic intersects the abnormal, the time is longer along the hyperbolic than the abnormal. At this intersection, the hyperbolic geodesic ceases to be optimal. (Right) The time minimal value function along the dashed segment from the left sub-graph. The discontinuity occurs at the intersection between the hyperbolic and abnormal geodesics. It is represented by the green dot, which is the time along the abnormal geodesic (see \cite{BRW:2021} for its evaluation).}
\label{fig:cusp_continuity_value_function_scl}
\vspace{-0.2cm}
\end{figure}

\paragraph{\textbf{Time minimal synthesis}}
We use the heading angle and the Clairaut relation to stratify the Lagrangian manifold $\mathcal{L}$ given by the image of $\exp(t\vv{\Htrue})(q_0, \cdot)$, see \cite[Chapter~10]{BonnardChyba:2003}, where $q_0$ is in the strong current domain.
One can also compute the time minimal synthesis (see Fig.~\ref{fig:optimal_synthesis_histical_example_scl}). In the strong current case, $q_0$ is not strongly locally controllable, i.e. $A(q_0,t)$ is not a neighborhood of $q_0$ for small-time.
The time $T(q_0)$ along the loop is the limit time such that $A(q_0,t)$ is a neighborhood of $q_0$.
Denote by $\Sigma(q_0)$ the adherence of the cut locus for geodesics starting in $q_0$ and contained in the adapted neighborhood. One has:

\begin{proposition}
Let $q_0$ in the strong current domain, then:
\begin{enumerate}
\item For $t>T(q_0)$, $A(q_0,t)$ is a neighborhood of $q_0$.
\item $\Sigma(q_0)$ is the union of two abnormal arcs, the one with a cusp point (corresponding to a conjugate point along the abnormal curve) being taken up to this cusp.    
\end{enumerate}
\end{proposition}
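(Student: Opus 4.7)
The plan is to combine three ingredients already established in the paper: the small-time fan-shape description of $A(q_0,t)$ from the proposition on small spheres in Section~\ref{sec:scl_CZG}, the explicit quadrature of the geodesic flow given just above for the historical example, and the cusp analysis together with Figures~\ref{fig:flot_geodesics_scl} and~\ref{fig:cusp_continuity_value_function_scl} describing self-intersections of hyperbolic geodesics near the cusped abnormal.

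For part~(1), I first note that since $|y_0|>1$ the velocity set $F_0(q_0)+\{u:\|u\|\le 1\}$ does not contain the origin, so for small $t$ the set $A(q_0,t)$ is a narrow fan pointing in the direction of $F_0(q_0)$, bounded by the two abnormal arcs issuing from $q_0$ and by the corresponding hyperbolic wavefront. Using the explicit formulas for $x(t)$, $y(t)$, $\gamma(t)$, I would track the evolution of these boundary curves: the cusped abnormal reaches its cusp at $t_{\mathrm{cusp}}=\tan\gamma_0$ on $|y|=1$ and then turns, so the outer boundary of $A(q_0,t)$ eventually wraps around $q_0$. I would then define $T(q_0)$ as the infimum of times $t$ for which this wrap closes, i.e. for which the boundary loop formed by the abnormal arc and the envelope of hyperbolic geodesics passes through $q_0$. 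For $t>T(q_0)$, strict inclusion and continuity of the flow give that $q_0$ lies in the interior of $A(q_0,t)$.

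For part~(2), I would restrict to an adapted conic neighborhood $\mathcal{V}$ of $q_0$ in the strong-current region and invoke the optimality status established in Section~\ref{sec:scl_PMP}: inside $\mathcal{V}$ the time-minimizers are either the two abnormal boundary arcs (simultaneously minimizing and maximizing by the proposition of Section~\ref{sec:optimality_abnormal}) or hyperbolic normal arcs filling the interior of the fan. A standard cut argument then shows that a hyperbolic arc ceases to be globally optimal precisely at its first meeting with an abnormal boundary, since two distinct minimizers with equal time reach that intersection; hence $\Sigma(q_0)\cap\mathcal{V}$ is contained in the two abnormal arcs. For the abnormal carrying the cusp, the self-intersection construction of Figure~\ref{fig:cusp_continuity_value_function_scl} produces, for every $t>t_{\mathrm{cusp}}$, a hyperbolic extremal reaching the same terminal point in strictly smaller time, so this abnormal stops being minimizing beyond the cusp, which thus plays the role of the conjugate point along a non-smooth image introduced in Section~\ref{sec:scl_cusp}. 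For the other abnormal, no cusp appears in $\mathcal{V}$ and optimality persists throughout.

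The main obstacle is step~(1): tracking the morphogenesis of $\partial A(q_0,t)$ across the cusp event, because the hyperbolic envelope folds and self-intersects near the cusp and so the boundary of $A(q_0,t)$ detaches from the wavefront before the loop closes; an explicit identification of $T(q_0)$ from the closed-form integrals of the preceding proposition will be needed, and the case analysis must distinguish the branches $\gamma_0\in(-\pi/2,\pi/2)$ and $\gamma_0\in(-\pi,-\pi/2)\cup(\pi/2,\pi]$. A secondary difficulty in step~(2) is to certify that the hyperbolic extremal realizing the shorter time beyond the cusp is itself globally minimizing from $q_0$ up to the intersection point; this can be handled by ruling out earlier cuts via the Jacobi-field analysis of Section~\ref{sec:scl_PMP} and by direct use of the explicit expression of $x(t)$ in terms of $\gamma$.
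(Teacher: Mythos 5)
The paper states this proposition without a formal proof: it is asserted on the strength of the explicit integration of the flow, the computation of the cusp, the figures, and an explicit deferral to \cite{BRW:2021} for the proof in the general case. Your plan follows essentially that same route (explicit quadratures, tracking of the boundary of $A(q_0,t)$, loss of optimality of the hyperbolic geodesics on the abnormal arcs), and your identification of $T(q_0)$ with the time along the limit loop matches the paper's definition. So in structure you are aligned with the paper; the issue is with one key step.

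The genuine gap is in your mechanism for part (2). You assert that a hyperbolic arc loses optimality at its first meeting with an abnormal arc ``since two distinct minimizers with equal time reach that intersection.'' This is a separating-line argument, and it is exactly what does \emph{not} happen here: the paper states (and Fig.~\ref{fig:cusp_continuity_value_function_scl} illustrates) that when the self-intersecting hyperbolic geodesic meets the abnormal, \emph{the time is strictly longer along the hyperbolic than along the abnormal}. That strict inequality is precisely why the time minimal value function is discontinuous across the abnormal arc, which is the central phenomenon of Section~\ref{sec:scl_cusp}; if two minimizers arrived with equal times, the value function would be continuous there and the whole point of the analysis would collapse. The correct mechanism is that the abnormal arc bounds the locally accessible sector: points just on the near side of the arc are reached quickly by direct hyperbolic geodesics (or by the abnormal itself), while the looping geodesic arrives there strictly later, so it ceases to be minimizing upon crossing. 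You need to replace the equal-time cut argument by this accessibility/comparison argument, and then the closure of the resulting cut points is indeed carried by the two abnormal arcs, the cusped one truncated at the cusp. The remaining items you flag yourself as obstacles (the morphogenesis of $\partial A(q_0,t)$ through the cusp event in part (1), and certifying global optimality of the looping hyperbolic arc up to the crossing) are real and are not resolved in this paper either; they are the content delegated to \cite{BRW:2021}.
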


\def\sizeFig{0.5}
\begin{figure}[ht!]
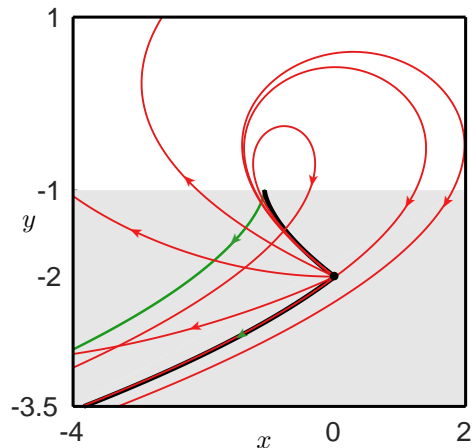

\centering
\def\x{493}
\def\y{470}
\begin{tikzgraphics}{\sizeFig\textwidth}{\x}{\y}{Flot_synthese}
    \pxcoordinate{0.55*\x}{1.0*\y}{A}; \draw (A) node {$x$};
    \pxcoordinate{0.04*\x}{0.5*\y}{A}; \draw (A) node {$y$};
    
    \pxcoordinate{0.485*\x}{0.539*\y}{A}; \draw (A) node[green, sloped, rotate=225] {\tiny \ding{228}};
    \pxcoordinate{0.5*\x}{0.755*\y}{A}; \draw (A) node[green, sloped, rotate=215] {\tiny \ding{228}};
    \pxcoordinate{0.655*\x}{0.4*\y}{A}; \draw (A) node[red, sloped, rotate=260] {\tiny \ding{228}};
    \pxcoordinate{0.863*\x}{0.45*\y}{A}; \draw (A) node[red, sloped, rotate=240] {\tiny \ding{228}};
    \pxcoordinate{0.945*\x}{0.45*\y}{A}; \draw (A) node[red, sloped, rotate=240] {\tiny \ding{228}};
    \pxcoordinate{0.4*\x}{0.737*\y}{A}; \draw (A) node[red, sloped, rotate=200] {\tiny \ding{228}};
    \pxcoordinate{0.385*\x}{0.4*\y}{A}; \draw (A) node[red, sloped, rotate=135] {\tiny \ding{228}};
    \pxcoordinate{0.27*\x}{0.518*\y}{A}; \draw (A) node[red, sloped, rotate=155] {\tiny \ding{228}};
\end{tikzgraphics}
\vspace{-0.5cm}
\caption{Minimal time optimal synthesis in an adapted neighborhood containing the limit loop. The initial point is $q_0 = (-2,0)$. The abnormal geodesics are in green. Hyperbolic geodesics are in red and the domain of strong current is in gray.}
\label{fig:optimal_synthesis_histical_example_scl}
\end{figure}

\section{Conclusion}

In this article, the results of \cite{BonnardKupka:1993} are applied to planar Zermelo navigation problems to give a neat analysis of the role of abnormal geodesics as limit curves of the accessibility set and to describe the fan shape of the balls with small radius. Generic singularities of the time minimal value function associated to normal directions are all well known due to earlier Withney classification \cite{Withney:1955}, see also \cite{Zermelo:1931} for the Hamiltonian frame. But based on the two case studies, we describe and provide a mathematical model of a (stable) singularity in the abnormal direction. This corresponds to a cusp singularity of the abnormal geodesics, taken as a limit points of self-intersecting normal geodesics. Moreover, in this situation, the time minimal value function in not continuous. Our study completes the contribution of \cite{BCW2019} devoted to the calculation of spheres with general radius, in the vortex case, when at the initial point the current is weak. It is a further step to analyze general navigation problems in the plane, in the case with a symmetry of revolution, combining geometric methods with numerical simulations. Also, it can serve as models to analyze singularities of the exponential mapping in the non-integrable case. In particular in the normal case using \eqref{eq:scl_forme_normal} and in the abnormal case using \eqref{eq:scl_forme_normal_abnormal} and \eqref{eq:scl_historical_example}. Besides, this gives the corresponding models of the singularities of the value function. See \cite{BRW:2021} for a construction of such a model in the general case.


\end{document}